\documentclass[11pt]{amsart}
\usepackage{amsmath,amsthm,amssymb,url,hyperref}
\frenchspacing \addtolength{\textwidth}{2cm}
\addtolength{\hoffset}{-1cm} \addtolength{\textheight}{2cm}
\addtolength{\voffset}{-1cm}
\usepackage[all]{xy}

\newtheorem*{theorems}{Theorem}
\newtheorem{theorem}{Theorem}[section]
\newtheorem{lemma}[theorem]{Lemma}

\theoremstyle{definition}
\newtheorem{definition}[theorem]{Definition}
\newtheorem{example}[theorem]{Example}

\theoremstyle{remark}
\newtheorem{remark}[theorem]{Remark}

\newcommand{\de}{\partial}

\newcommand{\sA}{\mathcal{A}}

\newcommand{\g}{\mathfrak{g}}
\newcommand{\m}{\mathfrak{m}}
\newcommand{\K}{\mathbb{K}}

\newcommand{\Id}{\operatorname{Id}}
\newcommand{\Spec}{\operatorname{Spec}}

\newcommand{\MC}{\operatorname{MC}}
\newcommand{\Def}{\operatorname{Def}}

\newcommand{\tot}{\operatorname{Tot}}
\newcommand{\Hilb}{\operatorname{Hilb}}
\newcommand{\Art}{\mathbf{Art}}
\newcommand{\Set}{\mathbf{Set}}

% Useful environments
\newenvironment{acknowledgement}{\par\addvspace{17pt}\small\rm
\trivlist\item[\hskip\labelsep{\it Acknowledgement.}]}
{\endtrivlist\addvspace{6pt}}
%%%%%%%%%%%%%%%%%%%%%%%%%%%%%%

\begin{document}

\title{Deformations of algebraic subvarieties}
\author{Donatella Iacono}
\address{\newline Max-Planck Institut f\"ur Mathematik,
\hfill\newline Vivatsgasse 7, D 53111 Bonn Germany}
\email{iacono@mpim-bonn.mpg.de}
\date{}

\begin{abstract}
In this paper, we use (bi)semicosimplicial language  to study the classical problem of infinitesimal deformations of a closed subscheme in a fixed smooth variety,  defined over an algebraically closed
field of characteristic 0. In particular, we give an explicit description of the  differential
graded Lie algebra controlling this problem.
\end{abstract}

\maketitle

\section*{Introduction}

In the last fifty  years,   deformation  theory has played an
important role in algebraic and complex geometry.
The main goal is the classification of  families of geometric
objects   in such a way that the classifying space  (the so
called moduli space) is a  reasonable geometric space. In
particular, each point of our moduli space corresponds to one
geometric object (class of isomorphism).
The study of small deformations of the complex structures of
complex manifolds started with the works of K.~Kodaira and
D.C.~Spencer \cite{bib Kodaira SpencerII} and   M.~Kuranishi
\cite{bib kuranishi}.
Then, A.~Grothendieck \cite{bib Grothendieck}, M.~Schlessinger
\cite{bib Schlessinger } and M.~Artin \cite{bib Artin} formalized
this theory translating it into a functorial language. The idea
is that, with   infinitesimal deformations of a geometric object,
we can associate a deformation functor of Artin rings  $F: \Art \to \Set$.  For example, we can study the functor $\Def_X$ of infinitesimal deformations of a variety $X$ or the functor $ \Hilb_{X}^Z$ of infinitesimal deformations of a subvariety $Z$ in a fixed variety $X$.
The fundamental fact is that, using these functors, we are able to study the formal neighborhood of a point in the moduli space. In particular, we can determine the tangent space or analyze the obstructions (smoothness) problem. %\cite{ManettiSemireg,IaconoSemireg,Iacono Manetti}.

A  modern approach to   the study of deformation functors, associated with geometric objects,  is via differential
graded Lie algebras   or, in general, via
$L_\infty$-algebras.  At this stage, we can think about these structures  as a generalization of   differential graded vector spaces
in which we   have    a bracket, plus some   compatibility
conditions between the differential and the bracket.
Once we have  a differential graded Lie algebra     $L$, we can define the associated deformation functor  $\Def_L: \Art \to \Set$,   using the solutions of the Maurer-Cartan equation up to gauge equivalence.

The guiding principle is the idea  due, at least, to P.~Deligne, V.~Drinfeld,
D.~Quillen and M.~Kontsevich  \cite{K} that \lq
\lq in characteristic zero  every deformation problem is
controlled by a differential graded Lie algebra".
In other words, if $F$ is the deformation functor  associated with a geometric problem, then there exists a differential graded Lie algebra $L$ (up to quasi-isomorphism) such that $\Def_L \cong F$.
We point out  that it is easier to study a deformation functor associated with a differential graded Lie algebra   but, in general, it
is not an easy task to find the right differential graded Lie algebra (up to
quasi-isomorphism) associated with the problem \cite{{konNOTE}}.
A first example, in which the associated differential graded Lie algebra is well understood,
is the case of deformations of  complex manifolds. If $X$ is a
complex compact manifold, then the infinitesimal deformations of $X$ are controlled by its Kodaira-Spencer algebra $KS_X$, see \cite{GoMil2,manRENDICONTi,ManettiSeattle} or \cite[Theorem II.7.3]{Iaconophd}. We recall that  $KS_X=\oplus_i \Gamma(X,\sA_X^{0,i}(\Theta_X))$, where $\sA_X^{0,i}(\Theta_X)$ is the sheaf of the  $(0,*)$-forms on $X$, with values in the holomorphic tangent bundle $\Theta_X$.

In general, if we work over an algebraically closed field   of characteristic zero, different from the complex numbers,  then we can not use the  Kodaira-Spencer algebra.

A  strategy to solve this problem, and \lq \lq produce\rq \rq differential graded Lie algebras,   is via semicosimplicial objects \cite{hinich,Pridham,scla,DDE coherent}.

\noindent Actually, the fundamental idea  goes back to K.~Kodaira and D.C.~Spencer:\lq\lq a deformation of $X$ is regarded as the gluing of the same polydisks
via different identifications\rq\rq \cite[pag.~182]{Kobook}.
In other words, a deformation of a  geometric object
consists in deforming the object locally and then glue
back together these local deformations.
Then, from the algebraic point of view, we have to  find the algebraic objects that control  locally the deformations and then glue them together.
Thus, we can think at a semicosimplcial object as a sequence of objects, that controls locally the deformations, and a sequence of maps, that controls the gluing. For example, let   $X$ be a smooth projective variety, over an algebraically closed field $\K$ of characteristic 0, with  tangent sheaf $\Theta_X$. Given an affine open cover $\mathcal{U}=\{U_i\}$ of
$X$,  we can define  the   \v{C}ech semicosimplicial Lie
algebra $\Theta_X(\mathcal{U})$, i.e.,  we have a sequence of Lie algebras $\{\mathfrak{g}_k= {\prod_{i_0<\cdots <i_k}\Theta_X(U_{i_0 \cdots  i_k})}\}$ and a \lq\lq lot\rq\rq of maps among them, that  are the restrictions to open subsets. In particular,  $ \mathfrak{g}_0= \prod_{i}\Theta_X(U_{i}) $ and  each $\Theta_X(U_{i})$ controls the infinitesimal deformations of $U_{i}$; moreover, the maps controls the gluing of deformations, see
 \cite{scla} and \cite[Section 5]{Iacono Manetti}.

In general, we will have a semicosimplicial differential graded Lie algebra, $\mathfrak{g}^\Delta=\{\mathfrak{g}_k\}_k$, with $\mathfrak{g}_0$ that controls the deformations of each open of the cover, as in the case of deformations of varieties or of coherent sheaves \cite{DDE coherent,dde2}.

Next, once we have a  semicosimplicial  differential graded Lie algebra $\mathfrak{g} ^\Delta$, we need to find out just one   differential graded Lie algebra. Following \cite{navarro,scla}, there is a canonical way to define a differential graded  Lie algebra
${\tot}_{TW}(\g^{\Delta})$, using the   Thom-Whitney construction.
In conclusion, given a geometric deformation problem, if we are able to associate with it a semicosimplicial  differential graded Lie algebra, then   we can  find out  just one differential graded Lie algebra   controlling our problem.

Inspired by these ideas, in this paper we use semicosimplicial language to study infinitesimal deformations of closed subschemes.
More precisely, let $X$ be a smooth variety, defined over an algebraically closed
field $\K$ of characteristic 0, and  $Z \subset X$   a closed subscheme.
Denote by $ \Hilb_{X}^Z$ the functor of infinitesimal deformations of
$Z$ in $X$ and  by ${\Hilb'}_{X}^Z$ the subfunctor of  locally trivial infinitesimal   deformations. We recall that  $ \Hilb_{X}^Z={\Hilb'}_{X}^Z$, whenever $Z$ is smooth.
For $\K= \mathbb{C}$ and $Z$ smooth, the analysis of this problem via differential graded Lie algebra is due to M. Manetti \cite{ManettiSemireg}.
Here, we   extend his work to all  algebraically closed
fields $\K$ of characteristic 0, using semicosimplicial language; more precisely,   it is convenient to use bisemicosimplial Lie algebras.
Indeed, let ${\Theta}_X$ be the tangent sheaf of $X$ and ${\Theta}_X(-\log Z)$ the sheaf of tangent vectors to $X$ which are tangent to $Z$. Denote by  ${\chi}  : {\Theta}_X(-\log Z) \hookrightarrow {\Theta}_X$  the inclusion of sheaves
of Lie algebras. We can associate with ${\Theta}_X(-\log Z) $ and $
{\Theta}_X$ the  \v{C}ech semicosimplicial Lie algebra ${\Theta}_X(-\log Z) (\mathcal{U})$ and ${\Theta}_X (\mathcal{U})$, respectively; and so we can consider the
bisemicosimplicial Lie algebra $\chi^\blacktriangle:{\Theta}_X(-\log Z) (\mathcal{U}) \to {\Theta}_X (\mathcal{U})$.
Once again,  using the  Thom-Whitney  construction, we can define a
differential graded Lie algebra ${\tot_{TW}^\blacktriangle(\chi^\blacktriangle)}$. This algebra controls the deformations of the closed subscheme $Z$;
more precisely, we prove the following theorem.

\begin{theorems}[A]
Let $X$ be a smooth variety, defined over an algebraically closed
field $\K$ of characteristic 0, and  $Z \subset X$   a closed subscheme. Then,
there exists an isomorphism  of functors
$\Def_{\tot_{TW}^\blacktriangle(\chi^\blacktriangle)}  \cong
{\Hilb'}_{X}^Z$. In particular, if $Z\subset X$ is smooth, then
$\Def_{\tot_{TW}^\blacktriangle(\chi^\blacktriangle)}   \cong
{\Hilb}_{X}^Z$.

\end{theorems}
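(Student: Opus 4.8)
The plan is to prove the isomorphism by making both functors completely explicit in \v{C}ech terms and then matching them degree by degree. On the algebraic side, the first step is to invoke the comparison machinery for the Thom-Whitney construction recalled above: for a semicosimplicial differential graded Lie algebra $\g^\Delta$ one has a natural description of $\Def_{\tot_{TW}(\g^\Delta)}$ as a functor of \lq\lq total\rq\rq\ Maurer-Cartan elements modulo gauge, assembled from the groups $\exp(\g_k\otimes\m_A)$ and the coface maps \cite{scla,navarro}. I would first establish the analogous statement for a bisemicosimplicial object, namely that $\Def_{\tot_{TW}^\blacktriangle(\chi^\blacktriangle)}(A)$ is computed by a total Maurer-Cartan datum for $\Theta_X(\mathcal{U})$ together with a lift witnessing compatibility with the subalgebra $\Theta_X(-\log Z)(\mathcal{U})$, the gauge being the combined action of the two exponential directions. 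Since $\chi$ is the inclusion of the sheaf of vector fields tangent to $Z$, this datum will automatically encode automorphisms of the ambient that preserve the subscheme.

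The second step is to give the matching geometric description of ${\Hilb'}_X^Z$. A locally trivial deformation of $Z\subset X$ over $A\in\Art$ is, by definition, a flat family $\mathcal{Z}\subset X\times\Spec A$ that on each $U_i$ admits a trivialization, i.e. an automorphism of the trivial deformation $U_i\times\Spec A$ carrying $\mathcal{Z}\cap(U_i\times\Spec A)$ to the product $(Z\cap U_i)\times\Spec A$. Such a trivialization is an element of $\exp(\Theta_X(U_i)\otimes\m_A)$, and on a double intersection $U_{ij}$ the comparison of two trivializations is an automorphism preserving the product, hence an element of $\exp(\Theta_X(-\log Z)(U_{ij})\otimes\m_A)$. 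I would organize this transition data as a \v{C}ech $1$-cocycle for the morphism $\chi$, with equivalence of deformations corresponding exactly to change of trivialization. This yields a description of ${\Hilb'}_X^Z(A)$ in precisely the same shape as the one obtained for $\Def_{\tot_{TW}^\blacktriangle(\chi^\blacktriangle)}(A)$ in the first step.

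The third step is the identification itself: for every $A$ I would exhibit a bijection between the two explicit descriptions, sending a total Maurer-Cartan datum to the deformation obtained by gluing the trivial local families along the transition automorphisms it prescribes, and check that gauge equivalence on one side matches equivalence of deformations on the other, together with naturality in $A$. A useful consistency check guiding the bookkeeping is that the shifted cone of $\chi$ is quasi-isomorphic to the normal sheaf $N_{Z/X}$, so that the tangent and obstruction spaces of $\tot_{TW}^\blacktriangle(\chi^\blacktriangle)$ reproduce $H^0(Z,N_{Z/X})$ and $H^1(Z,N_{Z/X})$, the classical tangent and obstruction spaces of the Hilbert functor. Finally, for the last assertion, when $Z$ is smooth the inclusion $Z\subset X$ is locally a linear subspace, so every embedded deformation is locally trivial; hence ${\Hilb'}_X^Z={\Hilb}_X^Z$ and the second isomorphism follows from the first.

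The main obstacle I anticipate is the first step: setting up and proving the explicit comparison theorem for the Thom-Whitney total of a \emph{bi}semicosimplicial Lie algebra, and in particular getting the gauge equivalence right in both simplicial directions simultaneously. One must verify that the two exponential actions, the nerve direction of the \v{C}ech cover and the morphism direction of $\chi$, combine into a single gauge action whose orbits match equivalence classes of locally trivial deformations, and that the whole description is independent, up to quasi-isomorphism, of the chosen affine cover $\mathcal{U}$.
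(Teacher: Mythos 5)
Your proposal is correct and follows essentially the same route as the paper's proof: the paper likewise reduces $\Def_{\tot_{TW}^\blacktriangle(\chi^\blacktriangle)}$ to the explicit Maurer--Cartan description of the injective morphism $\chi_{TW}$ (via Theorem~\ref{teor.BIsemisipl H^1=Def TOT} and Example~\ref{example.funtore morfismo DGLA}), encodes a locally trivial embedded deformation by ambient automorphisms $e^{a_i}$ with $e^{-a_i}e^{a_j}\in\exp(\Theta_X(-\log Z)(U_{ij})\otimes\mathfrak{m}_A)$, and then matches the two descriptions together with their equivalence relations. The one obstacle you single out---the comparison theorem for the bisemicosimplicial Thom--Whitney total---is handled in the paper not by a simultaneous two-directional gauge analysis but by iterating the one-variable comparison (Lemma~\ref{lemma 3 tot TW coincidono} combined with Theorem~\ref{teor.semisipl H^1=Def TOT}), so that ingredient is already in place before the proof of Theorem~A begins.
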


In a forthcoming paper, we will use this theorem to study the obstruction to deformations of $Z$ in $X$, via the semiregularity map.

\medskip

\noindent The paper goes as follows: the first section is intended for the
nonexpert reader and is devoted to recall the basic notions of
differential graded Lie algebras  and their
role in  deformation theory.

\noindent In Section~\ref{sezio semicosimplicial},
we introduce  semicosimplicial objects and total constructions. In particular, we review
semicosimplicial differential graded Lie algebras,
the corresponding Thom-Whitney DGLA  and the associated deformation functors.

\noindent  Sections  \ref{sezio bisemicosi} is devoted to bisemicosimplicial objects and, again,  to the total constructions and the associated deformation functors. In particular, we describe the bisemicosimplicial Lie algebra $\chi^\blacktriangle: {\Theta}_X(-\log Z) (\mathcal{U}) \to {\Theta}_X (\mathcal{U})$, associated with
the inclusion  ${\chi}  : {\Theta}_X(-\log Z) \hookrightarrow {\Theta}_X$.

\noindent In Section \ref{section deformazioni Z in X}, we go back to geometric applications and we prove   Theorem  A.

\begin{acknowledgement}

I gratefully acknowledge  the Max Planck Institute of Bonn, where this note was carried out, for financial support and warm hospitality.
I wish to thank Marco Manetti for many useful discussions, advices
and suggestions.
This paper is dedicated  to   Marialuisa J. de Resmini:
I am   indebted  with her as she guides my first steps into the world of research.

\end{acknowledgement}

\noindent  {\bf Notation.} Throughout the paper, we work over an algebraically closed field $\K$ of characteristic zero.  All vector spaces, linear maps, tensor products etc. are intended over $\K$.
We denote by  $\Set$ the category of
sets (in a fixed universe) and by  $\Art$ the category of local Artinian $\K$-algebras (with residue field $\K$).
If $A$ is an object in $\Art$, then $\mathfrak{m}_A$ denotes its maximal ideal.

\bigskip

\section{Review of differential graded Lie algebras}\label{sezioDGLa l-infinito}

A differential graded vector space is a pair $(V,d)$, where
$V=\oplus_{i\in
\mathbb{Z}} V^i$ is a $\mathbb{Z}$-graded vector space and $d$ is a differential
of degree +1, i.e., $d:V^i \to V^{i+1}$ and $d \circ d=0$.
For every  integer $n$, we define a new differential graded
vector space $V[n]$, by setting
\[ V[n]^i=V^{n+i}\qquad \mbox{ and } \qquad  d_{V[n]}=(-1)^nd_V.\]

\begin{definition}\label{definizio DGLA}
A differential graded Lie algebra (DGLA for short) is a
triple $(L,[\, ,\, ],d)$, where $\displaystyle (L=\oplus_{i\in
\mathbb{Z}}L^i,d)$ is a differential graded vector space and  $[\, , \,]:
L\times L\to L$ is a bilinear map of degree zero, called bracket,  satisfying the following conditions:

 \begin{enumerate}

\item (graded  skewsymmetry) $[a,b]=-(-1)^{\deg(a)\deg(b)}[b,a]$;

\item (graded Jacobi identity) $
[a,[b,c]]=[[a,b],c]+(-1)^{\deg(a)\deg(b)}[b,[a,c]]$;

\item (graded Leibniz rule) $d[a,b]=[da,b]+(-1)^{\deg(a)}[a,db]$.
\end{enumerate}

\end{definition}

\begin{example}
If $L=\oplus L^i$ is a DGLA, then $L^0$ is a Lie algebra in the
usual sense; vice-versa, every Lie algebra is a differential
graded Lie algebra concentrated in degree 0 (and differential zero).

\end{example}

\begin{example}\label{exe L  DGLA LxA}
If $L$ is a DGLA and $B$ is a commutative $\K$-algebra, then $L
\otimes B$ has a natural structure of DGLA, given by
$$
[l\otimes a , m \otimes b]=[l,m]\otimes ab;
$$
$$
d(l\otimes a)=dl \otimes a.
$$
\end{example}

\smallskip

A morphism of differential graded Lie algebras $\phi\colon L \to M$ is  a linear map
that preserves degrees and commutes with brackets and
differentials. A quasi-isomorphism of DGLAs is a morphism
that induces an isomorphism in cohomology. Two DGLAs $L$ and $M$ are said to be
quasi-isomorphic if they are equivalent under the
equivalence relation generated by: $L\sim M$ if there exists
a quasi-isomorphism $\phi: L\to M$.\par

\subsection{Deformation functor associated with a DGLA}

\begin{definition}
Let $L$ be a DGLA; then, the  Maurer-Cartan functor associated
with $L$  is the functor
$$
MC_L:\Art \to \Set,
$$
$$
MC_L(A)=\{ x \in L^1\otimes m_A \ |\ dx+ \displaystyle\frac{1}{2}
[x,x]=0 \}.
$$
\end{definition}
\noindent Note that, in the previous equation, we use the DGLA structure on $L\otimes m_A$ induced by the one on $L$ (see Example
\ref{exe L  DGLA LxA}).

\begin{definition}\label{definizio gauge L DGLA}
Two elements $x$ and  $y \in L^1\otimes m_A$ are
 gauge equivalent if there exists $a \in L^0\otimes m_A$, such
that
$$
y=e^a * x:=x+\sum_{n\geq 0}  \frac{ [a,-]^n}{(n+1)!}([a,x]-da).
$$
\end{definition}
\noindent The operator $*$ is called the gauge action of the group $\exp(L^0
\otimes m_A)$ on $L \otimes m_A$; indeed, $e^a*e^b*x=e^{a \bullet
b}*x$, where $\bullet$  is the Baker-Campbell-Hausdorff product in
the nilpotent DGLA $L \otimes m_A$, i.e., $a \bullet b  =a+b +
\displaystyle\frac{1}{2}[a,b]+\frac{1}{12} [a,[a,b]]-\frac{1}{12}
[b,[b,a]]+ \cdots $.

\begin{definition}\label{dhef funtore defo DLGA DEF_L}
The deformation functor associated with a differential
graded Lie algebra  $L$ is
$$
\Def_L:\Art \to \Set,
$$
$$
\Def_L(A)=\frac{\MC_L(A)}{\text{gauge}}=\frac{\{ x \in L^1\otimes \mathfrak{m}_A \ |\ dx+
\displaystyle\frac{1}{2} [x,x]=0 \}}{\exp(L^0\otimes \mathfrak{m}_A )  }.
$$
\end{definition}

\begin{remark}
Every morphism of DGLAs induces a natural transformation of the
associated deformation functors. If
$L$ and $M$ are quasi-isomorphic DGLAs, then the associated
functor $\Def_L$ and $\Def_M$ are isomorphic
\cite{SchSta,GoMil1,GoMil2}, \cite[Corollary~3.2]{ManettiDGLA}, or
\cite[Corollary~5.52]{manRENDICONTi}.
\end{remark}

\section{Semicosimplicial  objects}\label{sezio semicosimplicial}

Let $\mathbf{\Delta}_{\operatorname{mon}}$ be  the category whose
objects are the finite ordinal sets $[n]\!=\!\{0,1,\ldots,n\}$,
$n=0,1,\ldots$, and whose morphisms are order-preserving injective
maps among them. Every morphism in
$\mathbf{\Delta}_{\operatorname{mon}}$, different from the
identity, is a finite  composition of coface morphisms:
\[
\partial_{k}\colon [i-1]\to [i],
\qquad \partial_{k}(p)=\begin{cases}p&\text{ if }p<k\\
p+1&\text{ if }k\le p\end{cases},\qquad k=0,\dots,i.
\]
The relations about compositions of them are generated by
\[ \partial_{l}\partial_{k}=
\partial_{k+1}\partial_{l}\, ,\qquad\text{for every }l\leq k.\]

\begin{definition}\label{def simplicial}
According to \cite{EZ,weibel}, a semicosimplicial object in
a category $\mathbf{C}$ is a  covariant functor $A^\Delta\colon
\mathbf{\Delta}_{\operatorname{mon}}\to \mathbf{C}$. Equivalently,
a semicosimplicial  object $A^\Delta$ is a diagram in
$\mathbf{C}$:
 \[
\xymatrix{ {A_0} \ar@<2pt>[r]\ar@<-2pt>[r] & { A_1}
      \ar@<4pt>[r] \ar[r] \ar@<-4pt>[r] & { A_2}
\ar@<6pt>[r] \ar@<2pt>[r] \ar@<-2pt>[r] \ar@<-6pt>[r]& \cdots ,}
\]
where each $A_i$ is in $\mathbf{C}$, and, for each $i>0$, there
are $i+1$ morphisms
\[
\partial_{k}\colon {A}_{i-1}\to {A}_{i},
\qquad k=0,\dots,i,
\]
such that $\partial_{l}
\partial_{k}=\partial_{k+1} \partial_{l}$, for any $l\leq k$.
\end{definition}

\begin{example}\label{exa morfismo come semicosim 0}
Let $\chi:L \to M$ be a morphism in a category  $\mathbf{C}$.
Then, we can consider it as a semicosimplicial object in
$\mathbf{C}$, by extension with zero, i.e.,
 \[\mathfrak{\chi}^\Delta:\quad
\xymatrix{ {L} \ar@<2pt>[r]\ar@<-2pt>[r] & { M}
      \ar@<4pt>[r] \ar[r] \ar@<-4pt>[r] & 0 \cdots },\qquad
      \partial_{0}=\chi \mbox{ and } \partial_{1}=0.
\]
\end{example}

\begin{example} \label{example. chec F fascio}
Let $X$ be a smooth variety, defined over an algebraically closed
field of characteristic 0. Let $\mathcal{U}=\{U_i\}$ be an affine
open cover and $\mathcal{F}$ a sheaf of Lie algebras on $X$. Then,
we can define  the   \v{C}ech semicosimplicial Lie algebra
$\mathcal{F}(\mathcal{U})$ as the semicosimplicial Lie algebra
\[
 \mathcal{F}(\mathcal{U}):\quad \xymatrix{ {\prod_i\mathcal{F}(U_i)}
\ar@<2pt>[r]\ar@<-2pt>[r] & { \prod_{i<j}\mathcal{F}(U_{ij})}
      \ar@<4pt>[r] \ar[r] \ar@<-4pt>[r] &
      {\prod_{i<j<k}\mathcal{F}(U_{ijk})}
\ar@<6pt>[r] \ar@<2pt>[r] \ar@<-2pt>[r] \ar@<-6pt>[r]& \cdots},
\]
where  the coface maps   $  \displaystyle \partial_{k}\colon
{\prod_{i_0<\cdots <i_{h-1}}\mathcal{F}(U_{i_0 \cdots
i_{h-1}})}\to {\prod_{i_0<\cdots <i_h}\mathcal{F}(U_{i_0 \cdots
i_h})}$ are given by
\[\partial_{k}(x)_{i_0 \ldots i_{h}}={x_{i_0 \ldots
\widehat{i_k} \ldots i_{h}}}_{|U_{i_0 \cdots  i_h}},\qquad
\text{for }k=0,\ldots, h.\]
\end{example}

\subsection{The total construction}\label{section total for DG}
Given a  semicosimplicial differential
graded vector space
\[V^{\Delta}:\quad
\xymatrix{ {V_0}
\ar@<2pt>[r]\ar@<-2pt>[r] & { V_1}
      \ar@<4pt>[r] \ar[r] \ar@<-4pt>[r] & { V_2}
\ar@<6pt>[r] \ar@<2pt>[r] \ar@<-2pt>[r] \ar@<-6pt>[r]& \cdots ,}\]
the graded vector space $\bigoplus_{n\ge 0}V_n[-n]$ has two
differentials, i.e.,
\[ d=\sum_{n}(-1)^nd_n,\qquad \text{where}\quad d_n
\text{ is the differential of } V_n,
\]
and
\[
\partial=\sum_{i}(-1)^i\partial_i,\qquad \text{where}
\quad \partial_i\text{ are the coface maps}.\]
More explicitly, if
$v\in V^i_n$, then the degree of $v$ is $i+n$ and
\[ d(v)=(-1)^nd_n(v)\in V^{i+1}_n,\qquad
\partial(v)=\partial_0(v)-\partial_1(v)+\cdots+(-1)^{n+1}
\partial_{n+1}(v)\in V_{n+1}^i.\]
Since $d\partial+\partial d=0$, we define $\tot(V^{\Delta})$ as
the  graded vector space  $\bigoplus_{n\ge 0}V_n[-n]$, endowed
with the differential $D=d+\partial$.
\begin{remark}
In Example \ref{example. chec F fascio}, the  total complex $\tot(\mathcal{F}(\mathcal{U}))$,   associated with
 the  \v{C}ech semicosimplicial Lie algebra
$\mathcal{F}(\mathcal{U})$,
is nothing else that  the \v{C}ech
complex $\check{C}(\mathcal{U},\mathcal{F})$ of the sheaf $\mathcal{F}$.
\end{remark}
There is also another way to associate with   a  semicosimplicial differential graded vector space $V^{\Delta}$
 a differential graded vector space.
Namely, let   $(A_{PL})_n$ be the differential graded commutative algebra
of polynomial differential forms on the standard $n$-simplex
$\{(t_0,\ldots,t_n)\in \K^{n+1}\mid \sum t_i=1\}$ \cite{FHT}:
\[ (A_{PL})_n=\frac{\K[t_0,\ldots,t_n,dt_0,\ldots,dt_n]}
{(1-\sum t_i,\sum dt_i)}.\] For every $n,m$ the tensor product
$V_n\otimes (A_{PL})_m$ is a differential graded vector space and
then also $\prod_n V_n \otimes (A_{PL})_n$ is a differential
graded vector space.
Denote by
\[
\delta^{k}\colon (A_{PL})_n\to
(A_{PL})_{n-1},\quad
\delta^{k}(t_i)=\begin{cases}t_i&\text{ if }0\le i<k\\
0&\text{ if }i=k\\
t_{i-1}&\text{ if }k<i\end{cases},\qquad k=0,\dots,n,
\]
the face maps, for every $0\le k\le n$; then, there are well-defined
morphisms of   differential graded vector spaces
\[
Id\otimes \delta^{k}\colon V_n\otimes (A_{PL})_{n}\to V_n\otimes
(A_{PL})_{n-1},\]
\[\partial_{k}\otimes Id \colon V_{n-1}\otimes
(A_{PL})_{n-1}\to V_{n}\otimes (A_{PL})_{n-1}.
\]
The Thom-Whitney differential graded vector space
$\tot_{TW}(V^\Delta)$ of $V^\Delta$ is the  differential graded
subvector space of $\prod_n V_n \otimes (A_{PL})_n$, whose
elements are the sequences $(x_n)_{n\in\mathbb{N}}$ satisfying the
equations
\[(Id\otimes \delta^{k})x_n=
(\partial_{k}\otimes Id)x_{n-1},\; \text{ for every }\; 0\le k\le
n.
\]
\begin{lemma}
The differential graded vector spaces $\tot(V^{\Delta})$ and  $\tot_{TW}(V^\Delta)$ are quasi-isomorphic.
\end{lemma}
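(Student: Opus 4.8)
The plan is to write down an explicit comparison morphism $I\colon \tot_{TW}(V^\Delta)\to\tot(V^\Delta)$, given by integration over simplices, and to prove that it is a quasi-isomorphism by exhibiting a homotopy inverse built from the Dupont contraction of the polynomial de Rham algebra. For each $n$, let $\int_{\Delta^n}\colon (A_{PL})_n\to\K$ be integration of a polynomial form over the standard $n$-simplex; it vanishes outside the top-degree component $(A_{PL})_n^n$ and is therefore a linear map of degree $-n$. Applying $\Id\otimes\int_{\Delta^n}$ in each simplicial degree sends a Thom-Whitney sequence $(x_n)_n$ to the element of $\bigoplus_n V_n[-n]$ whose $n$-th component is $(\Id\otimes\int_{\Delta^n})x_n$. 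Since $\int_{\Delta^n}$ extracts the $(A_{PL})_n^n$ part, a summand $x_n\in V_n^i\otimes(A_{PL})_n^n$ (total degree $i+n$ in $\tot_{TW}$) is sent into $V_n^i\subset V_n[-n]$, again in total degree $i+n$, so $I$ is degree-preserving.

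First I would check that $I$ is a morphism of complexes, i.e. that it commutes with $D=d+\partial$. Compatibility with the internal differential $d=\sum_n(-1)^nd_n$ is immediate, because $\int_{\Delta^n}$ is $V_n$-linear and commutes with $d_n$. Compatibility with $\partial=\sum_i(-1)^i\partial_i$ is a consequence of Stokes' theorem: integration over $\Delta^n$ is related to integration over the codimension-one faces, and the simplicial identities relating the face maps $\delta^k$ on $(A_{PL})_\bullet$ to the coface maps $\partial_k$ on $V^\Delta$ convert these boundary contributions into the terms appearing in $\partial$. It is precisely here that the defining Thom-Whitney condition $(\Id\otimes\delta^k)x_n=(\partial_k\otimes\Id)x_{n-1}$ enters, turning a restriction of $x_n$ to a face into the coface image $\partial_k$ of $x_{n-1}$. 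Tracking the signs coming from the alternating sums is the first place where care is required.

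To prove that $I$ is a quasi-isomorphism I would invoke Dupont's explicit homotopy retraction of the polynomial de Rham complex onto simplicial cochains: the Whitney elementary-form map $E$, the integration $\int$, and a homotopy operator $h$ on $(A_{PL})_\bullet$ satisfying, on each simplex, $\int\circ E=\Id$ and $E\circ\int-\Id=d_{A_{PL}}h+h\,d_{A_{PL}}$, all three compatible with the face structure. Tensoring with the $V_n$ and passing to the Thom-Whitney equalizer, these assemble into a morphism $E\colon\tot(V^\Delta)\to\tot_{TW}(V^\Delta)$ and an operator $H$ on $\tot_{TW}(V^\Delta)$ for which $I\circ E=\Id$ on $\tot(V^\Delta)$ and $E\circ I-\Id=DH+HD$ on $\tot_{TW}(V^\Delta)$. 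Hence $I$ and $E$ are mutually inverse up to homotopy, and $I$ is a quasi-isomorphism. Conceptually, this is the statement that Dupont's contraction is a simplicial homotopy equivalence between the coefficient systems $(A_{PL})_\bullet$ and the normalized cochains, and that such an equivalence descends to the two totalizations, the cochain totalization being canonically $\tot(V^\Delta)$.

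The main obstacle is the last step: one must verify that applying $h$ component-wise preserves the equalizer condition defining $\tot_{TW}(V^\Delta)$, so that $H$ genuinely takes values in the Thom-Whitney subcomplex, and that the face-compatibility of $E$, $\int$ and $h$ is strong enough for the homotopy identity to survive the totalization with the correct signs. Everything else is bookkeeping; this compatibility between the Dupont homotopy and the end/equalizer structure is the real content, and it is what makes integration a genuine homotopy equivalence rather than merely a cohomology isomorphism.
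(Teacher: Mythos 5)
Your argument is correct and is precisely the standard Whitney-integration/Dupont-contraction proof that the paper itself does not spell out, delegating it instead to the references cited in its one-line proof (Whitney, Dupont, Navarro Aznar, Getzler, Cheng--Getzler, Fiorenza--Manetti--Martinengo). The point you single out as the crux --- that the homotopy $h$ is compatible with the face maps so that the induced operator $H$ preserves the Thom--Whitney equalizer --- is indeed exactly what Dupont's simplicial contraction supplies, so your sketch matches the intended argument.
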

\begin{proof}
See \cite{whitney,dupont1,dupont2,navarro,getzler,scla,chenggetzler}  for explicit description of the quasi-isomorphism.
\end{proof}

\smallskip

Let
 \[\mathfrak{g}^\Delta:\quad
\xymatrix{ {{\mathfrak g}_0} \ar@<2pt>[r]\ar@<-2pt>[r] & {
{\mathfrak g}_1}
      \ar@<4pt>[r] \ar[r] \ar@<-4pt>[r] & { {\mathfrak g}_2}
\ar@<6pt>[r] \ar@<2pt>[r] \ar@<-2pt>[r] \ar@<-6pt>[r]& \cdots ,}
\]
be a semicosimplicial differential graded Lie algebra.
Since, every DGLA is, in particular, a differential graded vector space, we can consider the associated total complex $\tot(\mathfrak{g}^\Delta)$. Even if all $\mathfrak{g}_i$  are DGLAs, there is no natural DGLA  structure  on $\tot(\mathfrak{g}^\Delta)$ \cite{cone,Iacono Manetti}.
\begin{example}
Let $\chi:L \to M$ be a morphism of DGLAs, then,  following
Example \ref{exa morfismo come semicosim 0}, we can associate with
it a semicosimplicial DGLA. Its total complex
$\tot(\chi^{\Delta})$ is nothing else than the (suspension of the)
mapping cone complex associated with $\chi$. Even in this simple
case, it is not possible to define a canonical DGLA structure on
  $\tot(\chi^{\Delta})$, such that the projection
$\tot(\chi^{\Delta})\to L$ is  a morphism of DGLAs \cite[Example
3.1]{Iacono Manetti}.
\end{example}

\noindent However, in the case of semicosimplicial DGLAs, we can apply the Thom-Whitney construction to $g^{\Delta}$: it turns out that $\tot_{TW}(\mathfrak{g}^{\Delta})$ has a structure of DGLA \cite{navarro,scla}.

\begin{remark}
Using the homotopy transfer, the DGLA structure of $\tot_{TW}(\mathfrak{g}^{\Delta})$ induces an $L_\infty$-algebra structure $\widetilde{\tot}(\mathfrak{g}^{\Delta})$ on the differential
graded vector space ${\tot}(\mathfrak{g}^{\Delta})$, such that
$
\widetilde{\tot}(\mathfrak{g}^{\Delta})$ and $
{\tot}_{TW}(\mathfrak{g}^\Delta)$ are quasi-isomorphic;  see \cite{cone,scla} or \cite[Corollary 3.3]{Iacono Manetti}.
\end{remark}

\subsection{Deformation functor associated with semicosimplicial DGLAs}\label{section. deformat semico DGLA}

Let ${\g}^\Delta$ be a  semicosimplicial DGLA. Applying  the    Thom-Whitney construction of the previous section, we  can consider the  DGLA ${\tot}_{TW}(\g^{\Delta})$  and so it is well defined the associated   deformation functor $\Def_{{\tot}_{TW}(\g^{\Delta})} $.
Beyond this way, there is another natural, and more geometric, way
to  define a deformation functor associated with ${\g}^\Delta$, see   \cite[Definitions~1.4 and
1.6]{Pridham}, \cite[Section~3]{scla} or \cite[Definitions~2.1 and 2.2]{DDE coherent}.

More precisely, if ${
\g}^\Delta$ is a semicosimplicial DGLA,  we can define the functor
\[
 Z^1_{\rm sc}( \exp { \g}^\Delta ) \colon \Art \to \Set,
\]
such that, for all $A \in \Art$, $Z^1_{\rm sc}(\exp \g^{\Delta})(A)$ is the set of the pairs $(l,m)\in
(\g_0^1\otimes \mathfrak m_A) \oplus (\g^0_1 \otimes \mathfrak m_A) $, satisfying the following conditions:
\begin{enumerate}
  \item $dl+\frac{1}{2}[l,l]=0$;
  \item $\partial_{1}l=e^{m}*\partial_{0}l$;
  \item ${\partial_{0}m} \bullet {-\partial_{1}m} \bullet
{\partial_{2}m} =dn+[\partial_{2}\partial_{0}l,n]$, for some $n\in {\mathfrak g}_2^{-1}\otimes{\mathfrak m}_A$.
\end{enumerate}
Moreover, we define the functor
\[
 H^1_{\rm sc}( \exp { \g}^\Delta) \colon \Art \to \Set,
\]
such that
\[
H^1_{\rm sc}(\exp \g^\Delta)(A)= \frac{Z^1_{\rm sc}
(\exp \g^\Delta)(A)}{\sim},
\]
where
$(l_0,m_0)$ and $(l_1,m_1) \in Z^1_{\rm sc} (\exp \g^{\Delta})(A)$
are equivalent under the relation  $\sim$ if  and only if there
exist elements $a \in \g^0_0\otimes \m_A$ and $b\in{\mathfrak
g}_1^{-1}\otimes{\mathfrak m}_A$, such that
 \begin{enumerate}
   \item $ e^a * l_0=l_1$;
   \item $- m_0\bullet -\partial_{1}a \bullet m_1
\bullet \partial_{0}a=db+[\partial_{0}l_0,b]$.
 \end{enumerate}

\begin{example}
Let $L$ be a differential graded Lie algebra, then it can be considered as a semicosimplicial DGLA $\mathfrak {L}^\Delta$   by zero extension, i.e., $\mathfrak {L}^\Delta_0=L$ and $\mathfrak {L}^\Delta_i=0$, for all $i>0$. In this case, the above functors  $Z^1_{\rm sc}( \exp { \mathfrak {L}}^\Delta )$ and $H^1_{\rm sc}(\exp \mathfrak {L}^\Delta)$ reduce to $\MC_L$ and $\Def_L$, respectively.

\end{example}

\begin{example}\label{example.funtore morfismo DGLA}
If  $\chi: L \to M$ is a morphism of DGLAs, then we can consider it as a simple case of semicosimplicial DGLA ${\chi}^\Delta$, extending $\chi $ by zero (see Example
\ref{exa morfismo come semicosim 0}).

In this case, the functors  $Z^1_{\rm sc}( \exp { \chi}^\Delta)$  and $H^1_{\rm sc}( \exp { \chi}^\Delta)$ coincide with the  functors $\MC_{\chi}$ and $\Def_\chi$ defined in \cite[Section 2]{ManettiSemireg}. More precisely, we have
\[ \Def_\chi(A)=\frac{\MC_{\chi}(A)}{\exp(L^0\otimes\mathfrak{m}_A)
\times \exp(dM^{-1}\otimes\mathfrak{m}_A)},\]
where
\[ \MC_{\chi}(A)=
\left\{(x,e^a)\in (L^1\otimes\mathfrak{m}_A)\times \exp(M^0\otimes
\mathfrak{m}_A)\mid dx+\frac{1}{2}[x,x]=0,\;e^a\ast\chi(x)=0\right\},\]
and the gauge action of ${\exp(L^0\otimes\mathfrak{m}_A)
\times \exp(dM^{-1}\otimes\mathfrak{m}_A)}$ is given by the formula
\[ (e^l, e^{dm})\ast (x,e^a)=(e^l\ast x, e^{dm}e^ae^{-\chi(l)})=(e^l\ast x,
e^{dm\bullet a\bullet -\chi(l)}).\]%

In particular, if $\chi:L\to M$ is an injective morphism of DGLAs, then for every $A \in \Art$, we have
\[ \MC_{\chi}(A)=
\left\{e^a\in   \exp(M^0\otimes
\mathfrak{m}_A)\mid  \;e^{-a}\ast0 \in  L^1\otimes\mathfrak{m}_A  \right\}.\]
Under this identification, the gauge action becomes
\[\exp(L^0\otimes\mathfrak{m}_A)
\times \MC_{\chi}(A) \to \MC_{\chi}(A), \qquad (e^m,e^a)\mapsto e^ae^{-m}, \]
and then
\[ \Def_\chi(A)=\frac{\MC_{\chi}(A)}{\exp(L^0\otimes\mathfrak{m}_A).}
 \]
\end{example}

\begin{example}
If all $\g_i=0$, for all $i>1$, then the functors   $Z^1_{\rm sc}(\exp \g^\Delta)$ and $H^1_{\rm sc}(\exp \g^\Delta)$ reduce  to the  functors $\MC_{(\partial_0, \partial_1)}$ and  $\Def_{(\partial_0, \partial_1)}$, respectively,  associated with the pair of morphisms of DGLAs $\partial_0, \partial_1: \g_0 \to \g_1$, introduced in \cite{IaconoIMNR}.
\end{example}

\begin{example}\label{example.funtore se DGLA sono LIE}
If each $\g_i$ is concentrated in degree zero, i.e.,
  $\g^\Delta$ is a semicosimplicial  Lie
algebra,  then the functors   $Z^1_{\rm sc}(\exp \g^\Delta)$ and
$H^1_{\rm sc}(\exp \g^\Delta)$ reduce to
the one defined in \cite[Section 3]{scla}. More explicitly,
in this case, we have
\[
Z ^1_{sc}(\exp { \g}^\Delta )(A)=\{ x \in {\g}_1 \otimes
\mathfrak{m}_A   \ |\  e^{\de_{0}x}e^{-\de_{1}x}e^{\de_{2}x}=1
\},
\]
and $x \sim y$ if and only if there exists
$a\in {\g}_0\otimes\mathfrak{m}_A$, such that
$e^{-\de_{1}a}e^{x}e^{\de_{0}a}=e^y$.

\end{example}

Therefore, given a   semicosimplicial DGLA ${ \g}^\Delta$, we can define two deformation functors, $\Def_{{\tot}_{TW}(\g^{\Delta})}$ and $ H^1_{\rm sc}(\exp\g^\Delta)$.
The relation between these functors is given by the  following theorem.
\begin{theorem}\label{teor.semisipl H^1=Def TOT}
Let  ${ \g}^\Delta$ be  a semicosimplicial DGLA such that $H^{k}(\g_i)=0$, for all $i$ and for all
$k<0$. Then, there exists a
natural isomorphism of deformation functors
\[
\Def_{{\tot}_{TW}(\g^{\Delta})} \simeq
  H^1_{\rm sc}(\exp\g^\Delta).
\]
\end{theorem}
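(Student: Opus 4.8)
The plan is to produce explicit natural transformations in both directions between $\MC_{\tot_{TW}(\g^\Delta)}$ and $Z^1_{\rm sc}(\exp\g^\Delta)$, and to check that they descend to mutually inverse isomorphisms on the quotient functors $\Def_{\tot_{TW}(\g^\Delta)}$ and $H^1_{\rm sc}(\exp\g^\Delta)$. The bridge is the explicit description of the Thom-Whitney complex: an element of degree $1$ is a compatible family $(x_n)_n$ with $x_n\in(\g_n\otimes(A_{PL})_n)^1\otimes\m_A$, and the point is that the three conditions defining $Z^1_{\rm sc}$ are exactly what survives after integrating out the polynomial-form directions on the $0$-, $1$- and $2$-simplices.

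First I would define the restriction map $\Phi\colon\MC_{\tot_{TW}(\g^\Delta)}\to Z^1_{\rm sc}(\exp\g^\Delta)$. Since $(A_{PL})_0=\K$, the component $x_0\in\g_0^1\otimes\m_A$ is an honest element $l$, and the Maurer-Cartan equation restricted to the $0$-simplex is precisely condition $(1)$, namely $dl+\frac12[l,l]=0$. On the $1$-simplex the component $x_1$ is a flat $\g_1$-valued connection whose $dt$-part integrates to an element $m\in\g_1^0\otimes\m_A$; flatness is the parallel-transport equation, and the compatibility $(\Id\otimes\delta^k)x_1=(\partial_k\otimes\Id)x_0$ of $x_1$ with its two faces turns this into condition $(2)$, $\partial_1 l=e^m*\partial_0 l$. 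On the $2$-simplex the coefficient of the area $2$-form lies in $\g_2^{-1}\otimes\m_A$ and integrates to the element $n$; Stokes' theorem applied to the flatness of $x_2$, together with the composition law for the three boundary holonomies, yields condition $(3)$, $\partial_0 m\bullet-\partial_1 m\bullet\partial_2 m=dn+[\partial_2\partial_0 l,n]$. A parallel bookkeeping shows that a gauge $u\in(\tot_{TW}(\g^\Delta))^0\otimes\m_A$ restricts on the $0$- and $1$-simplices to a pair $(a,b)\in(\g_0^0\otimes\m_A)\oplus(\g_1^{-1}\otimes\m_A)$ realizing exactly the equivalence relation $\sim$; hence $\Phi$ descends to $\Def_{\tot_{TW}(\g^\Delta)}\to H^1_{\rm sc}(\exp\g^\Delta)$.

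The harder direction is the inverse $\Psi$, and this is where the hypothesis $H^k(\g_i)=0$ for $k<0$ enters decisively. Given a representative $(l,m)$ I would reconstruct a compatible family $(x_n)_n$ by induction on the simplicial degree: set $x_0=l$; on the $1$-simplex take the canonical flat connection interpolating the two faces of $l$ with prescribed holonomy $e^m$; on the $2$-simplex use $n$ to produce the required flat filling. For $n\ge3$ the obstruction to extending the family over the $n$-simplex, compatibly with its faces and with the Maurer-Cartan equation, is a class in a negative-degree cohomology group of $\g_n$, which vanishes by hypothesis; the same vanishing guarantees that any two fillings differ by a gauge with components in $\g_n^{\le-1}$. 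I expect this inductive lift to be the main obstacle: one must check at every stage that the obstruction genuinely lands in $H^{<0}(\g_n)$ and that the indeterminacy is precisely a gauge, which is exactly what the cohomological hypothesis is designed to supply.

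Finally I would verify that $\Phi$ and $\Psi$ are inverse to one another up to gauge and that they match the two equivalence relations, so that together they induce the asserted isomorphism $\Def_{\tot_{TW}(\g^\Delta)}\simeq H^1_{\rm sc}(\exp\g^\Delta)$. Naturality in $A\in\Art$ is automatic, since every step above is given by universal algebraic formulas (integration of polynomial forms over simplices, the Baker-Campbell-Hausdorff product, and the gauge action), each of which is compatible with morphisms of Artinian algebras.
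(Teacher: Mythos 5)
The paper itself does not prove this theorem: the ``proof'' given here consists entirely of citations to \cite{scla} (Theorem~6.8, for semicosimplicial Lie algebras) and \cite{DDE coherent} (Theorem~7.6, for the general case). Your outline is therefore doing real work that the paper delegates, and it is broadly in the spirit of those references for the forward direction: your map $\Phi$ --- read $l$ off $x_0$, the holonomy $e^m$ off the flat $1$-simplex component, and $n$ off the top-degree part of $x_2$ --- is essentially the natural transformation they construct, once ``integration'' and ``Stokes'' are replaced by the rigorous parallel-transport/BCH formalism of Getzler--Hinich. Where you genuinely diverge is the inverse: the cited proofs do not build $\Psi$ by hand, but show that the morphism of deformation functors is bijective on tangent spaces and injective on obstruction spaces and invoke the standard isomorphism criterion; your explicit obstruction-theoretic induction buys a more concrete statement at the cost of considerably more bookkeeping. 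Two points in your sketch need sharpening. First, the obstruction computation should be made precise: restriction of $\g_n\otimes(A_{PL})_n$ to the boundary of the standard $n$-simplex is a surjection of DGLAs whose kernel has cohomology $H^{*-n}(\g_n)$, so the obstruction to filling the $n$-simplex lies in $H^{2-n}(\g_n)$ and the indeterminacy in $H^{1-n}(\g_n)$; these vanish only for $n\geq 3$ and $n\geq 2$ respectively. In particular for $n=2$ the obstruction group is $H^{0}(\g_2)$, which is \emph{not} assumed to vanish --- that is exactly why the datum $n\in\g_2^{-1}\otimes\m_A$ and condition (3) are built into $Z^1_{\rm sc}$, and your induction must genuinely start at $n=3$ after handling $n\leq 2$ directly from $(l,m,n)$. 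Second, the same lifting argument is needed on the level of gauges in both directions (a full Thom--Whitney gauge restricts to a pair $(a,b)$, and conversely any such pair must be extended to a full gauge), since the relation $\sim$ only records components in $\g_0^0$ and $\g_1^{-1}$. With those two points supplied, your plan is sound.
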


\begin{proof}
In the case of semicosimplicial Lie algebra, this theorem was proved in \cite[Theorem 6.8]{scla}. For the general case, see
\cite[Theorem 7.6]{DDE coherent}.
\end{proof}

\section{Bisemicosimplicial  objects}\label{sezio bisemicosi}

In this section, we generalize the notion of semicosimplicial
objects, defining bisemicosimplicial objects.

\begin{definition}\label{def-BI simplicial}
According to  \cite[Chapter IV]{goerss-jardine}, a
bisemicosimplicial object $A^ \blacktriangle$ in a
category $\mathbf{C}$ is a covariant functor $A^ \blacktriangle
\colon \mathbf{\Delta}_{\operatorname{mon}} \times
\mathbf{\Delta}_{\operatorname{mon}} \to \mathbf{C}$;
equivalently, a bisemicosimplicial object in $\mathbf{C}$
is a semicosimplicial object in the category of semicosimplicial
object in $\mathbf{C}$. More explicitly, it consists of objects
$A_{i,j}$, for all $i,j \geq 0$, and  morphisms $\partial^{V_i}_{k}$ and    $\partial^{H_j}_{s}$   in $\mathbf{C}$, for each $i,j>0$, such that
\[
\partial^{V_i}_{k}\colon {A}_{i,j-1}\to {A}_{i,j},
\qquad k=0,\dots,j,
\]
\[
\partial^{H_j}_{s}\colon {A}_{i-1,j}\to {A}_{i,j},
\qquad s=0,\dots,i,
\]
and the following compatibility conditions are satisfied

$$
\partial^{V_i}_{l} \circ \partial^{V_i}_{k}
 =\ \partial^{V_i}_{k+1}  \circ \partial^{V_i}_{l} , \qquad \forall \,
 l\leq k, \qquad
$$
$$
 \partial^{H_j}_{s}  \circ\partial^{H_j}_{ t}
 =\partial^{H_j}_{t+1} \circ \partial^{H_j}_{s},  \qquad  \forall  \,
  s\leq t, \qquad
$$
$$
\qquad \qquad \quad  { \partial^{H_{j+1}}_{s} } \circ {\partial^{V_i}_{k}} =
{\partial^{V_{i+1}}_{k}} \circ { \partial^{H_j}_{s} },
 \quad \forall  \,  s \leq i+1,\, \forall  \,  k \leq j+1.
$$
\end{definition}

\noindent We shall say that the object $A_{i,j}$ has bidegree $(i,j)$ or,
precisely, horizontal degree $i$ and vertical degree $j$, and that
$ { \partial^{H_j}_{\bullet } }$ and $ {\partial^{V_i}_{\bullet }}$ are horizontal (height $j$)
and vertical (column $i$) morphisms, respectively.
In particular, for all fixed $j$,
$(A_{\bullet,j},{\partial^{H_j}_{\bullet}} )$ is a
(horizontal) semicosimplicial object in $\mathbf{C}$; analogously, for
all fixed $i$, $(A_{i,\bullet}
,{\partial^{V_i}_{\bullet }} )$ is a (vertical) semicosimplicial
object in $\mathbf{C}$.
To sum up, a bisemicosimplicial  object $A^  \blacktriangle$
looks like a diagram
 \[
\xymatrix{   \vdots \  \ar@<2pt>[r]\ar@<-2pt>[r] & \vdots
      \ar@<4pt>[r] \ar[r] \ar@<-4pt>[r] & \vdots
\ar@<6pt>[r] \ar@<2pt>[r] \ar@<-2pt>[r] \ar@<-6pt>[r]& \cdots \\
{A_{0,2}} \ar@<2pt>[r]\ar@<-2pt>[r] \ar@<6pt>[u] \ar@<2pt>[u]
\ar@<-2pt>[u] \ar@<-6pt>[u]
  &
{A_{1,2}} \ar@<4pt>[r] \ar[r] \ar@<-4pt>[r] \ar@<6pt>[u]
\ar@<2pt>[u] \ar@<-2pt>[u] \ar@<-6pt>[u]
  &
{A_{2,2}} \ar@<6pt>[r] \ar@<2pt>[r] \ar@<-2pt>[r] \ar@<-6pt>[r]
\ar@<6pt>[u] \ar@<2pt>[u] \ar@<-2pt>[u] \ar@<-6pt>[u]
 & \cdots \\
{A_{0,1}} \ar@<2pt>[r]\ar@<-2pt>[r] \ar@<4pt>[u] \ar[u]
\ar@<-4pt>[u]
  &
{A_{1,1}} \ar@<4pt>[r] \ar[r] \ar@<-4pt>[r] \ar@<4pt>[u] \ar[u]
\ar@<-4pt>[u]
  &
{A_{2,1}} \ar@<6pt>[r] \ar@<2pt>[r] \ar@<-2pt>[r] \ar@<-6pt>[r]
\ar@<4pt>[u] \ar[u] \ar@<-4pt>[u]
  & \cdots \\
{A_{0,0}} \ar@<2pt>[r]\ar@<-2pt>[r] \ar@<2pt>[u]\ar@<-2pt>[u]
  &
{A_{1,0}} \ar@<4pt>[r] \ar[r] \ar@<-4pt>[r] \ar@<4pt>[r]
\ar@<2pt>[u]\ar@<-2pt>[u]
  &
{A_{2,0}} \ar@<6pt>[r] \ar@<2pt>[r] \ar@<-2pt>[r] \ar@<-6pt>[r]
\ar@<6pt>[r] \ar@<2pt>[u] \ar@<-2pt>[u]
  & \cdots ,}
\]
where each row and each column is a semicosimplicial object   and
each square commutes in a simplicial sense, i.e., for all $s,k,i$
and $j$, the following diagram commutes
 \[
\xymatrix{ {A_{i,j+1}}  \ar [r]^{ \partial^{H_{j+1}}_{s} }  &
 {A_{i+1,j+1}}  \\
{A_{i,j}} \ar [r]_{ \partial^{H_j}_{s} }
\ar[u]^{\partial^{V_i}_{k}}
  & {A_{i+1,j}}   \ar[u]_{\partial^{V_{i+1}}_{k}}.
 }
\]
\begin{example}

Every semicosimplicial object in a category $\mathbf{C}$ can be considered as a bisemicosimplcial object concentrated in zero (vertical or horizontal) degree.
\end{example}

Bisemicosimplicial objects naturally arise in simple situation. Indeed,
let $\mathcal{F}$ and $\mathcal{G}$   be  sheaves  on a variety $X$,
with value in a category $\mathbf{C}$. Fix an affine open cover  $\mathcal{U}=\{U_i\}$.  Then, as in Example
\ref{example. chec F fascio}, we denote by $\mathcal{F}(\mathcal{U})$ and $\mathcal{G}(\mathcal{U})$  the associated \v{C}ech semicosimplicial objects in $\mathbf{C}$.
 Next, let $\varphi :  \mathcal{F} \to \mathcal{G}$
be a morphism of sheaves. Since $\varphi $ commutes with
restrictions of every open subsets, it induces  a morphism
$\varphi^{\Delta} : \mathcal{F}(\mathcal{U}) \to
\mathcal{G}(\mathcal{U})$ of semicosimplcial objects. Finally, as in
Example \ref{exa morfismo come semicosim 0}, we can consider the
semicosimplicial extension  of $\varphi^{\Delta}$ (by zero) to get
a bisemicosimplcial object $\varphi^\blacktriangle : \mathcal{F}(\mathcal{U}) \to
\mathcal{G}(\mathcal{U})$ in $\mathbf{C}$.
This construction is commutative, i.e., we can firstly extend $\varphi $ (by zero) to get a semcosimplicial sheaf of object in $\mathbf{C}$, and then apply the  \v{C}ech
semicosimplicial  construction to all sheaves.

\begin{example}\label{example. chec TY(X)--TY}

Let $X$ be a smooth variety, defined over an algebraically closed
field of characteristic 0, and $\mathcal{U}=\{U_i\}$ be an affine
open cover. Let $Z \subset X$ be a closed subscheme of $X$.
We denote by ${\Theta}_X(-\log Z)$ the sheaf of germs of tangent vectors to $X$ which are tangent to $Z$ \cite[Section 3.4.4]{Sernesi}.
We recall that, if $\mathcal{I}\subset \mathcal{O_X}$ is the ideal sheaf of Z in X, then $\Theta(-\log  Z)=\{f\in Der(\mathcal{O_X},\mathcal{O_X})\mid f(\mathcal{I})
\subset\mathcal{I}\}$.
 Let   ${\chi}  : {\Theta}_X(-\log Z) \hookrightarrow
{\Theta}_X$ be the inclusion of sheaves
of Lie algebras.
Then, we can associate with ${\Theta}_X(-\log Z) $ and $
{\Theta}_X$ the  \v{C}ech semicosimplicial Lie algebra ${\Theta}_X(-\log Z) (\mathcal{U})$ and ${\Theta}_X (\mathcal{U})$, respectively. Finally, extending the morphism $\chi$ by zero, we get a a bisemicosimplicial Lie algebra $\chi^\blacktriangle: {\Theta}_X(-\log Z) (\mathcal{U}) \to {\Theta}_X (\mathcal{U})$.

\noindent More explicitly, we have the following  diagram
\[
\xymatrix{    \vdots \ \ar@<2pt>[r]\ar@<-2pt>[r] & \vdots
      \ar@<4pt>[r] \ar[r] \ar@<-4pt>[r] & 0 \\
\prod_{i<j<k} {\Theta}_X(-\log Z) (U_{ijk})
\ar@<2pt>[r]\ar@<-2pt>[r] \ar@<6pt>[u] \ar@<2pt>[u] \ar@<-2pt>[u]
\ar@<-6pt>[u]
  &
\prod_{i<j<k} {\Theta}_X  (U_{ijk})  \ar@<4pt>[r] \ar[r]
\ar@<-4pt>[r] \ar@<6pt>[u] \ar@<2pt>[u] \ar@<-2pt>[u]
\ar@<-6pt>[u]
  &
0 \ar@<6pt>[u] \ar@<2pt>[u] \ar@<-2pt>[u] \ar@<-6pt>[u]
   \\
\prod_{i<j}{\Theta}_X(-\log Z) (U_{ij})  \ar@<2pt>[r]\ar@<-2pt>[r]
\ar@<4pt>[u] \ar[u] \ar@<-4pt>[u]
  &
\prod_{i<j} {\Theta}_X (U_{ij})  \ar@<4pt>[r] \ar[r] \ar@<-4pt>[r]
\ar@<4pt>[u] \ar[u] \ar@<-4pt>[u]
  &
0 \ar@<4pt>[u] \ar[u] \ar@<-4pt>[u]
   \\
\prod_{i}{\Theta}_X(-\log Z) (U_{i})  \ar@<2pt>[r]\ar@<-2pt>[r]
\ar@<2pt>[u]\ar@<-2pt>[u]
  &
\prod_{i}{\Theta}_X (U_{i})  \ar@<4pt>[r] \ar[r] \ar@<-4pt>[r]
\ar@<4pt>[r] \ar@<2pt>[u]\ar@<-2pt>[u]
  &
0.   \ar@<2pt>[u] \ar@<-2pt>[u]
   ,}
\]

\end{example}

\subsection{The total construction}

Let $V^\blacktriangle= (V_{n,m}^*, d_{n,m}) _{n,m}$ be a bisemicosimplicial differential
graded vector space; in particular, we recall that each row and each column is a semicosimplicial differential
graded vector space.
Then, as in Section \ref{section total for DG},  with each horizontal semicosimplicial
differential graded vector space $(V_{\bullet,m}^\Delta,{\partial^{H_m}_{\bullet }} )$, we can  associate the total complex $\tot (V_{\bullet, m}^\Delta)$.
We recall that $\displaystyle \tot  (V_{\bullet, m}^\Delta)=\bigoplus _{n\geq0}V_{n,m}^*[-n]$ and its differential is $\displaystyle D_m= \sum_n (-1)^n d_{n,m} +\sum_j (-1)^j \partial^{H_m}_{j} $.
In this way, we construct a semicosimplicial differential graded vector space  $\tot^{H,\Delta}(V^\blacktriangle)$
\[
\xymatrix{    \vdots  \\
\tot  (V_{\bullet, 2}^\Delta) \ar@<6pt>[u] \ar@<2pt>[u]
\ar@<-2pt>[u] \ar@<-6pt>[u]  \\
\tot (V_{\bullet, 1}^\Delta) \ar@<4pt>[u] \ar[u] \ar@<-4pt>[u]
 \\
\tot (V_{\bullet, 0}^\Delta).  \ar@<2pt>[u]\ar@<-2pt>[u]
  ,}
\]

In particular, we can still apply the total construction to $\tot^{H,\Delta}(V^\blacktriangle)$ to obtain the differential graded vector space  $\tot(\tot^{H,\Delta}(V^\blacktriangle))$. More explicitly, $\displaystyle \tot(\tot^{H,\Delta}(V^\blacktriangle))= \bigoplus_m \tot  (V_{\bullet, m}^\Delta)[-m]= \bigoplus_{n,m} V_{n,m}^*[-n-m]$ and the differential is $D=\sum_m (-1)^m D_m +\sum_k (-1)^k\partial^{V_{\bullet}}_{k} =  \sum_{m,n} (-1)^{m+n} d_{n,m} +\sum_{j,m} (-1)^{j+m} \partial^{H_m}_{j}   +\sum_k (-1)^k\partial^{V_{\bullet}}_{k} $.

\medskip

Analogously, given $V^\blacktriangle=(V_{n,m}^*, d_{n,m})_{n,m}$, we can firstly focus our attention on each vertical semicosimplicial
differential graded vector space   $(V_{n,\bullet} ^\Delta
,{\partial^{V_n}_{\bullet}} )$. As before, we can associate with each column its total complex, to get a semicosimplicial differential graded vector space
  $\tot^{V,\Delta}(V^\blacktriangle)$
 \[
\xymatrix{ \tot(V_{0, \bullet }^\Delta)  \ar@<2pt>[r]
\ar@<-2pt>[r]  &  \tot(V_{1, \bullet }^\Delta)
      \ar@<4pt>[r] \ar[r] \ar@<-4pt>[r] &
       \tot(V_{2, \bullet }^\Delta)
\ar@<6pt>[r] \ar@<2pt>[r] \ar@<-2pt>[r] \ar@<-6pt>[r]& \cdots ,}
\]
In this case, $ \displaystyle \tot(V_{n, \bullet }^\Delta)=\bigoplus_m V_{n,m}^*[-m]$ and its differential is given by
${D'}_n= \sum_m (-1)^m d_{n,m} +\sum_j (-1)^j \partial^{V_n}_{j }$.
Then, applying again the total construction to   $\tot^{V,\Delta}(V^\blacktriangle)$, we get the differential graded vector space  $\tot(\tot^{V,\Delta}(V^\blacktriangle))$. In this case, we have $ \displaystyle \tot(\tot^{V,\Delta}(V^\blacktriangle)) = \bigoplus_n \tot(V_{n, \bullet }^\Delta)[-n]= \bigoplus_{n,m} V_{n,m}^*[-n-m]$ and the differential is $D'=\sum_n (-1)^n {D'}_n +\sum_k (-1)^k\partial^{H_{\bullet}}_{k  }= \sum_{m,n} (-1)^{n+m} d_{n,m} +\sum_{j,n} (-1)^{j+n} \partial^{V_n}_{j }+\sum_k (-1)^k\partial^{H_{\bullet}}_{k  }$.

Moreover, we can also consider the total complex  $(\tot^\blacktriangle (V^\blacktriangle),D)$ associated with the triple complex $(V_{n,m}^*,d_{n,m},\partial^V, \partial^H)  $. More explicitly,
$\tot^\blacktriangle (V^\blacktriangle)^i= \bigoplus_{n,m}V_{n,m}[-n-m]^{i-n-m}$
and  the differential is given by $D=d+\partial_1 +\partial_2$, where $d=\sum_{m,n} (-1)^{m+n} d_{n,m} $, $\partial_1= \sum_{j,m} (-1)^{j+m} \partial^{H_m}_{j }$ and  $\partial_2 = \sum_k (-1)^{k} \partial^{V_{\bullet}}_{k }$.

\begin{lemma}
Let $V^\blacktriangle=(V_{n,m}^*, d_{n,m})_{n,m}$ be a bisemicosimplicial differential graded vector space.
Then, the associated differential graded vector spaces $(\tot^\blacktriangle (V^\blacktriangle),D)$, $\tot(\tot^{H,\Delta}(V^\blacktriangle))$ and  $\tot(\tot^{V,\Delta}(V^\blacktriangle))$
are quasi isomorphic.
\end{lemma}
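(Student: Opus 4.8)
The plan is to prove the stronger assertion that the three differential graded vector spaces are in fact \emph{isomorphic}; since an isomorphism is a quasi-isomorphism, this yields the lemma. The starting observation, already made in the paragraphs preceding the statement, is that all three constructions share the \emph{same} underlying graded vector space $\bigoplus_{n,m}V_{n,m}^*[-n-m]$. Hence the entire content lies in comparing the three differentials, and the strategy is to do this directly and to absorb the only discrepancy, a bidegree-dependent sign, into an explicit diagonal sign twist.

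The first step is to notice that $\tot^\blacktriangle(V^\blacktriangle)$ and $\tot(\tot^{H,\Delta}(V^\blacktriangle))$ are not merely quasi-isomorphic but literally equal as differential graded vector spaces. Indeed, comparing the explicit formula for the differential of $\tot(\tot^{H,\Delta}(V^\blacktriangle))$ with the differential $D=d+\partial_1+\partial_2$ of $\tot^\blacktriangle(V^\blacktriangle)$ displayed just above the statement, one sees termwise that the internal part $\sum_{m,n}(-1)^{m+n}d_{n,m}$, the horizontal part $\sum_{j,m}(-1)^{j+m}\partial^{H_m}_{j}$ and the vertical part $\sum_k(-1)^k\partial^{V_\bullet}_{k}$ coincide in both. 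So this pair requires no further work, and it remains only to compare with $\tot(\tot^{V,\Delta}(V^\blacktriangle))$.

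For that comparison it is convenient to restrict each differential to a fixed summand $V_{n,m}^*$. There the differential $D$ of $\tot^\blacktriangle(V^\blacktriangle)$ acts as $(-1)^{n+m}d_{n,m}$ internally, carries an extra sign $(-1)^m$ on the alternating horizontal cofaces, and no extra sign on the vertical cofaces; whereas the differential $D'$ of $\tot(\tot^{V,\Delta}(V^\blacktriangle))$ carries the same $(-1)^{n+m}d_{n,m}$, no extra sign on the horizontal cofaces, and an extra sign $(-1)^n$ on the vertical cofaces. The two thus differ only by bidegree-dependent signs, and I would correct this by the diagonal twist
\[
\phi\colon \tot^\blacktriangle(V^\blacktriangle)\longrightarrow \tot(\tot^{V,\Delta}(V^\blacktriangle)),\qquad \phi(v)=(-1)^{nm}\,v\quad\text{for }v\in V_{n,m}^*,
\]
which is visibly a graded isomorphism, and then check that $\phi D=D'\phi$.

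The only real obstacle is the sign bookkeeping, which I would organize according to the three types of terms. On the internal differentials $d_{n,m}$, which preserve the bidegree, the relation $\phi D=D'\phi$ is automatic, since both differentials carry the identical sign $(-1)^{n+m}$ there. On the horizontal cofaces, which raise $n$ by one, the relation reduces to the identity $(-1)^m(-1)^{(n+1)m}=(-1)^{nm}$; on the vertical cofaces, which raise $m$ by one, it reduces to $(-1)^{n(m+1)}=(-1)^n(-1)^{nm}$. Both identities hold, so $\phi$ intertwines the differentials and is an isomorphism of differential graded vector spaces. Combined with the equality of the first two complexes established above, this shows all three are isomorphic, and in particular quasi-isomorphic, completing the proof.
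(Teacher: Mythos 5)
Your proof is correct, and it takes a genuinely different route from the paper, which disposes of the lemma in one line by appealing to ``a standard computation, using spectral sequence'' (presumably the comparison of the row and column filtrations of the triple complex). Your argument is both more explicit and strictly stronger: you observe that, with the sign conventions the paper itself writes out, the differentials of $\tot^\blacktriangle(V^\blacktriangle)$ and $\tot(\tot^{H,\Delta}(V^\blacktriangle))$ are term-by-term identical, so those two complexes literally coincide; and you then exhibit the diagonal twist $\phi(v)=(-1)^{nm}v$ on $V_{n,m}^*$ as an isomorphism of complexes onto $\tot(\tot^{V,\Delta}(V^\blacktriangle))$. I checked the three sign identities: on the internal differentials both sides carry $(-1)^{n+m}$; on the horizontal cofaces the required identity is $(-1)^{m}(-1)^{(n+1)m}=(-1)^{nm+2m}=(-1)^{nm}$; on the vertical cofaces it is $(-1)^{n(m+1)}=(-1)^{nm+n}$; all hold, and $\phi$ manifestly preserves the total grading since all three complexes share the underlying graded space $\bigoplus_{n,m}V_{n,m}^*[-n-m]$. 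What the spectral-sequence argument buys is robustness: it would still apply if the three totalizations had genuinely different underlying objects (products versus sums, or the Thom--Whitney variants), where no naive identification exists. What your argument buys is precision: it upgrades ``quasi-isomorphic'' to ``isomorphic,'' identifies the isomorphism canonically, and avoids any convergence or boundedness hypotheses that a spectral-sequence comparison would implicitly invoke.
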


\begin{proof}
It follows from  a standard computation, using spectral sequence.

\end{proof}

\bigskip

As in the previous section, we can also apply the  Thom-Whitney construction instead of the total complex construction.
Also in this case, we get two differential graded  vector spaces
$\tot_{TW}(\tot_{TW}^{H,\Delta} )$ and $\tot_{TW}(\tot_{TW}^{ \Delta, V} )$ depending, a priori, on the order of the construction.
There is also a more direct way, based on the Thom-Whitney construction,  to associate a differential graded vector space with a bisemicosimplicial differential graded vector space.

\begin{definition}
Let $V^  \blacktriangle=(V_{n,m})$ be a bisemicosimplicial DGLA.
The Thom-Whitney DGLA $\tot_{TW}^\blacktriangle(V^\blacktriangle)$
is defined as the sub-differential graded vector space of
$\prod_{n,m} V_{n,m} \otimes (A_{PL})_n \otimes (A_{PL})_m$, whose
elements are sequences $(x_{n,m})_{n,m}$ satisfying the relations:
\[
(\partial^{H_m}_{k }\otimes Id \otimes Id)x_{n,m}= (Id \otimes \delta^{k} \otimes
 Id ) x_{n+1,m},\; \text{ for every }\; 0\le k\le
n,
\]
and
\[
(\partial^{V_n}_{k }\otimes Id \otimes Id)x_{n,m}= (Id \otimes
 Id \otimes \delta^{k}) x_{n,m+1} ,\; \text{ for every }\; 0\le k\le
m.
\]
More explicitly, we are considering sequence of elements $(x_{n,m})_{n,m}=x_{n,m} \otimes \alpha_n \otimes
\beta_m \in V_{n,m} \otimes (A_{PL})_n \otimes (A_{PL})_m $ such
that
\[
\partial^{H_m}_{k }x_{n,m}  \otimes \alpha_n
\otimes \beta_m =   x_{n+1,m} \otimes \delta^{k} \alpha_{n+1}
\otimes \beta_{m}
\]
and
\[
\partial^{V_n}_{k } x_{n,m}  \otimes \alpha_n
\otimes \beta_m =  x_{n,m+1} \otimes \alpha_n \otimes \delta^{k}
\beta_{m+1}.
\]
\end{definition}

\begin{lemma}\label{lemma 3 tot TW coincidono}
Let $V^  \blacktriangle=(V_{n,m})$ be a bisemicosimplicial differential graded vector space;
then,  the Thom-Withney construction does not depend on the order, i.e., $\tot_{TW}(\tot_{TW}^{H,\Delta} ) \cong   \tot_{TW} (\tot_{TW}^{
V,\Delta} )  \cong\tot_{TW}^\blacktriangle(V^\blacktriangle)$.
\end{lemma}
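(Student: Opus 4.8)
The plan is to prove that all three Thom-Whitney constructions agree by identifying each with the same subobject of a common ambient product, namely $\prod_{n,m} V_{n,m}\otimes (A_{PL})_n\otimes (A_{PL})_m$. First I would unwind the two iterated constructions. For $\tot_{TW}(\tot_{TW}^{V,\Delta})$, the inner construction $\tot_{TW}^{V,\Delta}$ replaces each column $(V_{n,\bullet}^\Delta,\partial^{V_n}_\bullet)$ by a DGLA sitting inside $\prod_m V_{n,m}\otimes (A_{PL})_m$, cut out by the relations $(\partial^{V_n}_{k}\otimes \Id)x_{n,m}=(\Id\otimes\delta^{k})x_{n,m+1}$. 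This produces a horizontal semicosimplicial DGLA whose coface maps are induced by $\partial^{H_m}_s$, and applying $\tot_{TW}$ again lands inside $\prod_n(\cdots)\otimes (A_{PL})_n$, with the horizontal matching condition $(\partial^{H_m}_{k}\otimes\Id\otimes\Id)x_{n,m}=(\Id\otimes\delta^{k}\otimes\Id)x_{n+1,m}$. Comparing this with Definition of $\tot_{TW}^\blacktriangle$, I see the two systems of relations are literally the same pair of equations, so the main content is checking that the iterated construction genuinely lives inside the triple product $\prod_{n,m}V_{n,m}\otimes (A_{PL})_n\otimes (A_{PL})_m$ and not some larger or twisted object.

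The key technical point making the identification work is that the Thom-Whitney functor $\tot_{TW}$ is built from the tensor-and-matching recipe, and tensoring with $(A_{PL})_\bullet$ commutes with taking the inner subobject because the vertical relations (involving $(A_{PL})_m$ and $\delta^k$ on the $m$-index) are disjoint from the horizontal relations (involving $(A_{PL})_n$ and $\delta^k$ on the $n$-index). Concretely, an element of $\tot_{TW}(\tot_{TW}^{V,\Delta})$ is a sequence indexed by $n$ of elements of the inner DGLA, each tensored with a form in $(A_{PL})_n$; writing such an element out in the basis of the triple product, the outer matching condition is exactly the horizontal relation and the inner membership is exactly the vertical relation. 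Thus the assignment is a bijection on the level of underlying graded vector spaces, and one checks it respects both the differential $D=d+\partial_1+\partial_2$ and the bracket, since all three are defined componentwise on the triple product. The same argument run with the roles of horizontal and vertical exchanged identifies $\tot_{TW}(\tot_{TW}^{H,\Delta})$ with $\tot_{TW}^\blacktriangle(V^\blacktriangle)$.

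The main obstacle I anticipate is bookkeeping with the coface/face compatibility, specifically ensuring that the horizontal coface maps $\partial^{H_m}_s$ induced on the inner DGLAs $\tot_{TW}^{V,\Delta}(V^\blacktriangle)_n$ are well defined, i.e.\ that they send elements satisfying the vertical matching relations to elements again satisfying them. This is precisely where the mixed compatibility condition $\partial^{H_{j+1}}_s\circ\partial^{V_i}_k=\partial^{V_{i+1}}_k\circ\partial^{H_j}_s$ from Definition \ref{def-BI simplicial} enters: it guarantees that applying a horizontal coface commutes with the vertical matching constraint, so the outer semicosimplicial structure on the inner Thom-Whitney objects is genuinely semicosimplicial and the two-step construction is legitimate. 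Once this compatibility is in hand, the isomorphisms are canonical and the only remaining verification is that the three differentials, expressed in the common triple-product coordinates, coincide up to the sign conventions already fixed in the definitions of $D$, $D'$, and the Thom-Whitney differential, which is routine.
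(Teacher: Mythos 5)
Your proposal is correct and follows the same route as the paper, whose proof is the one-line remark that the lemma ``follows from the explicit description of the Thom-Whitney construction''; you have simply supplied the details that unwinding both iterated constructions inside the common triple product $\prod_{n,m}V_{n,m}\otimes(A_{PL})_n\otimes(A_{PL})_m$ yields the same two matching relations as the direct definition of $\tot_{TW}^\blacktriangle(V^\blacktriangle)$. Your observation that the mixed compatibility $\partial^{H_{j+1}}_s\circ\partial^{V_i}_k=\partial^{V_{i+1}}_k\circ\partial^{H_j}_s$ is what makes the outer semicosimplicial structure well defined is exactly the point the paper leaves implicit.
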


\begin{proof}
It follows from the explicit description of the Thom-Withney
construction.
\end{proof}
If  $\g^  \blacktriangle $ is a bisemicosimplicial DGLA, then, as in the semicosimplicial case, the differential graded vector space $\tot_{TW}^\blacktriangle(\g^\blacktriangle)$ inherits a structure of DGLA.

\begin{remark}
As for the semicosimplicial case, the differential graded vector spaces $\tot_{TW}^\blacktriangle(\g^\blacktriangle)$ and  $\tot^\blacktriangle (\g^\blacktriangle)$ are quasi-isomorphic.
In a forthcoming paper,    we will use the  DGLA structure of $\tot_{TW}^\blacktriangle(\g^\blacktriangle)$ and the homotopy transfer to define a canonical $L_\infty$-algebra structure
$\widetilde{\tot}^\blacktriangle (\g^\blacktriangle)$   on   $\tot^\blacktriangle (\g^\blacktriangle)$,   such that   $ \widetilde{\tot}^\blacktriangle (\g^\blacktriangle) $ and  $\tot_{TW}^\blacktriangle(\g^\blacktriangle)  $ are
  quasi-isomorphic $L_\infty$-algebra.

 \end{remark}

\subsection{Deformation functors associated with a bisemicosimplicial DGLA}\label{section. deformat bisemico DGLA}

In this section, we will describe how we can associate a deformation functor with a bisemicosimplicial DGLA.
In Section \ref{section. deformat semico DGLA}, we introduced the deformation functor $ H^1_{\rm sc}(\exp\g^\Delta)$ associated with
a semicosimplicial DGLA $\mathfrak{g}^\Delta$. Moreover, Theorem~\ref{teor.semisipl H^1=Def TOT} states that   $ H^1_{\rm sc}(\exp\g^\Delta) \simeq
\Def_{{\tot}_{TW}(\g^{\Delta})}$, whenever $H^{k}(\g_i)=0$, for all $i$ and for all $k<0$.

Next, let ${\g}^\blacktriangle $ be a  bisemicosimplicial DGLA.
In the previous section, we associated with  ${\g}^\blacktriangle $, the semicosimplicial DGLAs $\tot_{TW}^{H,\Delta} $ and $  \tot_{TW}^{V,\Delta} $. Therefore, we can naturally associate with  ${\g}^\blacktriangle $ the two deformations functors $ H^1_{\rm sc}( \exp   \tot_{TW}^{H,\Delta}) $ and $ H^1_{\rm sc}( \exp   \tot_{TW}^{V,\Delta}) $. Moreover, we associated with ${\g}^\blacktriangle $  the Thom-Whitney DGLA
$ \tot_{TW}^\blacktriangle({\g}^\blacktriangle)$ and so we can consider  its   deformation functor $  \Def_{\tot_{TW}^\blacktriangle({\g}^\blacktriangle)}$.
The following theorem explains the relation between all these functors.

\begin{theorem}\label{teor.BIsemisipl H^1=Def TOT}
Let  ${ \g}^\blacktriangle$ be  a bisemicosimplicial DGLA such that $H^{k}(\g_{i,j})=0$, for all $i,j$ and
$k<0$. Then, there exist
natural isomorphisms of deformation functors
$$
H^1_{\rm sc}( \exp   \tot_{TW}^{H,\Delta}) \cong \Def_{\tot_{TW}(\tot_{TW} ^{H,\Delta}(V^\blacktriangle))} \cong
$$
$$
\cong \Def_{\tot_{TW}^\blacktriangle({ \g}^\blacktriangle)}\cong
\Def_{\tot_{TW}(\tot_{TW}^{{V},\Delta}({ \g}^\blacktriangle))} \cong  H^1_{\rm sc}( \exp   \tot_{TW}^{V,\Delta}).
$$
\end{theorem}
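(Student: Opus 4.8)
The plan is to prove the five‑term chain by combining Theorem~\ref{teor.semisipl H^1=Def TOT}, applied twice, with Lemma~\ref{lemma 3 tot TW coincidono}: the two outer isomorphisms come from Theorem~\ref{teor.semisipl H^1=Def TOT}, and the two inner ones from Lemma~\ref{lemma 3 tot TW coincidono}. Throughout I read the $V^\blacktriangle$ occurring in the statement as the given bisemicosimplicial DGLA $\g^\blacktriangle$.

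First I would regard $\tot_{TW}^{H,\Delta}(\g^\blacktriangle)$ as a semicosimplicial DGLA in its own right, whose $m$‑th term is the DGLA $\tot_{TW}(\g_{\bullet,m}^\Delta)$. To invoke Theorem~\ref{teor.semisipl H^1=Def TOT} for it, I must check its hypothesis, namely that $H^{k}(\tot_{TW}(\g_{\bullet,m}^\Delta))=0$ for every $m$ and every $k<0$. This is the one genuinely non‑formal point. I use the quasi‑isomorphism $\tot_{TW}(\g_{\bullet,m}^\Delta)\simeq\tot(\g_{\bullet,m}^\Delta)=\bigoplus_n \g_{n,m}[-n]$ recalled in Section~\ref{section total for DG}, and then run the spectral sequence of the total complex filtered by the semicosimplicial degree $n$: its $E_1$‑page places $H^{q}(\g_{n,m})$ in total degree $n+q$. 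Since $H^{q}(\g_{n,m})=0$ for $q<0$ by hypothesis and $n\ge 0$, no term can occupy negative total degree, so $H^{k}(\tot_{TW}(\g_{\bullet,m}^\Delta))=0$ for $k<0$. With the hypothesis secured, Theorem~\ref{teor.semisipl H^1=Def TOT} gives the outer isomorphism
$$H^1_{\rm sc}(\exp \tot_{TW}^{H,\Delta}) \cong \Def_{\tot_{TW}(\tot_{TW}^{H,\Delta}(\g^\blacktriangle))}.$$
By the symmetry exchanging rows and columns, the identical argument applied to the vertical semicosimplicial DGLA $\tot_{TW}^{V,\Delta}(\g^\blacktriangle)$, whose $n$‑th term is $\tot_{TW}(\g_{n,\bullet}^\Delta)$, yields the other outer isomorphism
$$\Def_{\tot_{TW}(\tot_{TW}^{V,\Delta}(\g^\blacktriangle))} \cong H^1_{\rm sc}(\exp \tot_{TW}^{V,\Delta}).$$

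For the two inner isomorphisms I would appeal directly to Lemma~\ref{lemma 3 tot TW coincidono}, which identifies the three Thom‑Whitney constructions $\tot_{TW}(\tot_{TW}^{H,\Delta})$, $\tot_{TW}^\blacktriangle(\g^\blacktriangle)$ and $\tot_{TW}(\tot_{TW}^{V,\Delta})$. Because these identifications arise from the explicit description of the Thom‑Whitney functor, they respect the induced DGLA structures, hence they are isomorphisms of DGLAs; applying $\Def$ therefore gives
$$\Def_{\tot_{TW}(\tot_{TW}^{H,\Delta}(\g^\blacktriangle))} \cong \Def_{\tot_{TW}^\blacktriangle(\g^\blacktriangle)} \cong \Def_{\tot_{TW}(\tot_{TW}^{V,\Delta}(\g^\blacktriangle))}.$$
Concatenating the two outer and the two inner isomorphisms produces the full chain asserted by the theorem.

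The main obstacle is precisely the cohomological propagation in the second paragraph: the vanishing $H^{k}(\g_{i,j})=0$ for $k<0$ is imposed on the individual DGLAs of $\g^\blacktriangle$, whereas Theorem~\ref{teor.semisipl H^1=Def TOT} must be invoked for semicosimplicial DGLAs whose terms are themselves total complexes, so one has to verify that the vanishing survives the passage to $\tot_{TW}$; everything else is a formal assembly of results already established in the excerpt. Finally, naturality of the composite isomorphism follows because each constituent result—Theorem~\ref{teor.semisipl H^1=Def TOT} and the invariance of $\Def$ under isomorphism of DGLAs—is natural in $A\in\Art$, so the resulting isomorphism is an isomorphism of functors.
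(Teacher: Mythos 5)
Your proposal is correct and follows exactly the paper's own argument: the two outer isomorphisms come from Theorem~\ref{teor.semisipl H^1=Def TOT} applied to the horizontal and vertical semicosimplicial DGLAs, and the two inner ones from Lemma~\ref{lemma 3 tot TW coincidono}. The only difference is that you spell out, via the spectral sequence of the total complex, why the vanishing $H^{k}(\g_{i,j})=0$ for $k<0$ propagates to the terms $\tot_{TW}(\g_{\bullet,m})$ and $\tot_{TW}(\g_{n,\bullet})$, a step the paper simply asserts.
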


\begin{proof}
The cohomological constraint of  the hypothesis implies that   $H^{k}(\tot_{TW}^{H,\Delta}({ \g}^\blacktriangle)_m)= H^{k}(\tot_{TW}^{V,\Delta}({ \g}^\blacktriangle)_n) = 0$, for all $n,m$ and for all $k<0$. Therefore, the first and last isomorphisms    follow from Theorem \ref{teor.semisipl H^1=Def TOT}. The remaining isomorphisms follow from  Lemma \ref{lemma 3 tot TW coincidono}.
\end{proof}

\begin{example}\label{example. totTW TX(Z)--TX}

(Example \ref{example. chec TY(X)--TY} revisited) Let $X$ be a smooth variety,  $Z \subset X$   a closed subscheme and 
$\mathcal{U}=\{U_i\}_i$ an affine open cover of $X$. 
 Denote by ${\chi}  : {\Theta}_X(-\log Z) \hookrightarrow
{\Theta}_X$   the inclusion of sheaves of Lie algebras.
Following  Example \ref{example. chec TY(X)--TY}, we have the
bisemicosimplicial Lie algebra $\chi^\blacktriangle: {\Theta}_X(-\log Z) (\mathcal{U}) \to {\Theta}_X (\mathcal{U})$ and so we can consider the associated DGLA $ \tot_{TW}^\blacktriangle(\chi^\blacktriangle)$.
Moreover,  as in the   the previous construction, we can associate with $\chi$ two semicosimplicial DGLAs.
The easiest way is to consider the induced morphism of DGLA
${\chi_{TW}}: \tot_{TW}({\Theta}_X(-\log Z)(\mathcal{U})) \to \tot_{TW}({\Theta}_X(\mathcal{U}))$, and view it as a  semicosimplcial DGLA   by zero extension (see Example \ref{exa morfismo come semicosim 0}), i.e.,
 \[  \chi_{TW}^\Delta :\quad
\xymatrix{ {\tot_{TW}({\Theta}_X(-\log Z)(\mathcal{U}))} \ar@<2pt>[r]^{\qquad 0}\ar@<-2pt>[r]_{\qquad {\chi_{TW}}} & { \tot_{TW}({\Theta}_X(\mathcal{U}))}
      \ar@<4pt>[r] \ar[r] \ar@<-4pt>[r] & \ 0 \cdots }.
\]
Analogously,  applying the Thom-Whitney construction firstly on the rows,  we get  the   semicosimplcial DGLA $T^\Delta$
\[
\xymatrix{    \vdots  \\
T_2=\tot_{TW}(\prod_{i<j<k}{\Theta}_X(-\log Z) (U_{ijk}) \stackrel{\chi}{\to} \prod_{i<j<k }{\Theta}_X  (U_{ijk})) \ar@<6pt>[u] \ar@<2pt>[u]
\ar@<-2pt>[u] \ar@<-6pt>[u]  \\
T_1=\tot_{TW}(\prod_{i<j}{\Theta}_X(-\log Z) (U_{ij}) \stackrel{\chi}{\to}           \prod_{i<j}{\Theta}_X (U_{ij})) \ar@<4pt>[u] \ar[u] \ar@<-4pt>[u]
 \\
T_0=\tot_{TW}(\prod_{i}{\Theta}_X(-\log Z) (U_{i}) \stackrel{\chi}{\to} \prod_{i}{\Theta}_X(-\log Z) (U_{i})).  \ar@<2pt>[u]\ar@<-2pt>[u]
  ,}
\]
In this second case, the vertical maps are the restrictions to open subsets (see Example \ref{example. chec F fascio}). The previous
Theorem \ref{teor.BIsemisipl H^1=Def TOT} implies that there exist isomorphisms of deformation functors
$$
\Def_{\chi_{TW}} \cong
\Def_{\tot_{TW}^\blacktriangle(\chi^\blacktriangle)}\cong H^1_{\rm sc}( \exp      T^\Delta   ).
$$
We recall that the functor $\Def_{\chi_{TW}}$ is isomorphic to  $ H^1_{\rm sc}( \exp      \chi_{TW}^\Delta   )$ (see Example \ref{example.funtore morfismo DGLA}).
More explicitly, since $\chi$ is injective, for all $A \in \Art$, the set $\Def_{\chi_{TW}}(A)$ is given by
\[
\Def_{\chi_{TW}}(A) = \frac{\MC_{\chi_{TW}}(A)}{\sim},
\]
where
\begin{align*}
 \MC_{\chi_{TW}}(A)= &
 \{a\in  {\tot_{TW}({\Theta}_X(\mathcal{U}))}^0\otimes
\mathfrak{m}_A)\ |\\
&
\  e^{-a}\ast0 \in {\tot_{TW}({\Theta}_X(-\log Z)(\mathcal{U})}) ^1\otimes\mathfrak{m}_A  \},\\
\end{align*}
and $e^a\sim e^{a'} $ if and only if there exist $b \in
{\tot_{TW}({\Theta}_X(-\log Z)(\mathcal{U}))} ^0\otimes\mathfrak{m}_A,$ such that     $e^{a'} =   e^a e^{- b } $.

\end{example}

\section{Application: Deformations of subvarieties in a fixed smooth variety}\label{section deformazioni Z in X}

Let $X$ be a smooth variety, defined over an algebraically closed
field $\K$ of characteristic 0, and  $Z \subset X$   a  closed subscheme. We recall the definition of infinitesimal deformations of $Z$ in $X$ fixed, full details can be found in \cite{Sernesi}.
\begin{definition}
Let $A\in \Art$. An infinitesimal deformation of $Z$ in $X$
over $\Spec(A)$ is a cartisian  diagram
\begin{center}
$\xymatrix{Z \ar[rr]^i\ar[d] &  & Z_A   \ar[d]^{\pi}
\subset X \times \Spec(A)  \\
           \Spec(\K) \ar[rr]^{a} & & \Spec(A),  &  \\ }$
\end{center}

where $\pi$ is a    flat map  induced by the projection from $X
\times \Spec(A)$ to $\Spec(A)$.
The associated  infinitesimal deformation functor is
$$
\Hilb_{X}^Z : \Art \to \Set,
$$
such that
$$
\Hilb_{X}^Z(A)= \{
 \mbox{infinitesimal   deformations of $Z$ in $X$ over
 $\Spec(A)$} \}.
$$
\end{definition}
Moreover, we can  define the sub-functor
$$
{\Hilb'}_{X}^Z : \Art \to \Set,
$$
where
$$
{\Hilb'}_{X}^Z(A)= \{
 \mbox{locally trivial infinitesimal   deformations of $Z$ in $X$ over
 $\Spec(A)$}  \}.
$$
We recall that, an infinitesimal deformation $Z_A$ of $Z$ in $X$
over $\Spec(A)$ is called locally trivial if, for every point $z
\in Z$, there exists an open neighbourhood $U_z \subset Z$ such
that
\begin{center}
$\xymatrix{U_z \ar[r]^i\ar[d] &   {Z_A}_{|U_z}   \ar[d]^{\pi}   \\
           \Spec(\K) \ar[r]^{a} & \Spec(A),  &  \\ }$
\end{center}
is a trivial deformation of $U_z$.
Whenever $Z\subset X$ is smooth, then every deformation of $Z$ in $X$ is locally trivial and so ${\Hilb}_{X}^Z={\Hilb'}_{X}^Z$.

\smallskip

Next, following  Examples \ref{example. chec TY(X)--TY} and \ref{example. totTW TX(Z)--TX}, denote by  ${\chi}  : {\Theta}_X(-\log Z) \hookrightarrow
{\Theta}_X$,  the   inclusion of sheaves of Lie algebras, and by $\chi^\blacktriangle: {\Theta}_X(-\log Z) (\mathcal{U}) \to {\Theta}_X (\mathcal{U})$, the associated bisemicosimplicial Lie algebra, where $\mathcal{U}$ is an affine open cover of $X$.

\begin{theorem}
Let $X$ be a smooth variety, defined over an algebraically closed
field $\K$ of characteristic 0, and  $Z \subset X$   a closed subscheme. Then,
there exists an isomorphism of functors
$\Def_{\tot_{TW}^\blacktriangle(\chi^\blacktriangle)} \cong
{\Hilb'}_{X}^Z$. In particular, if $Z\subset X$ is smooth, then
$\Def_{\tot_{TW}^\blacktriangle(\chi^\blacktriangle)} \cong
{\Hilb}_{X}^Z$.
\end{theorem}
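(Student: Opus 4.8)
The plan is to prove the statement in two stages: first replace the DGLA $\tot_{TW}^\blacktriangle(\chi^\blacktriangle)$ by a concrete, computable deformation functor through the machinery of Section~\ref{sezio bisemicosi}, and then identify that functor with ${\Hilb'}_X^Z$ by a \v{C}ech description of locally trivial deformations. For the reduction, note that every entry $\g_{i,j}$ of $\chi^\blacktriangle$ is a product of sections of $\Theta_X$ or $\Theta_X(-\log Z)$ over an affine intersection, hence a Lie algebra concentrated in degree $0$; thus $H^k(\g_{i,j})=0$ for all $k<0$ and the hypothesis of Theorem~\ref{teor.BIsemisipl H^1=Def TOT} holds. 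As spelled out in Example~\ref{example. totTW TX(Z)--TX}, this yields natural isomorphisms
\[
\Def_{\tot_{TW}^\blacktriangle(\chi^\blacktriangle)}\cong \Def_{\chi_{TW}}\cong H^1_{\rm sc}(\exp T^\Delta),
\]
so it suffices to identify the rightmost functor with ${\Hilb'}_X^Z$. Here $T^\Delta$ has as its level-$j$ term the Thom--Whitney mapping cone of $\chi$ over the $(j+1)$-fold intersections, and its semicosimplicial maps are the \v{C}ech restrictions. The key local input is that over a single affine $U_I$, Example~\ref{example.funtore morfismo DGLA}, applied to the injective morphism $\chi\colon\Theta_X(-\log Z)(U_I)\hookrightarrow\Theta_X(U_I)$ of Lie algebras in degree $0$, gives
\[
\Def_{\chi_{U_I}}(A)=\frac{\exp(\Theta_X(U_I)\otimes\m_A)}{\exp(\Theta_X(-\log Z)(U_I)\otimes\m_A)},
\]
which is exactly the set of automorphisms of $U_I\times\Spec(A)$ restricting to the identity on the central fibre, modulo those preserving $(Z\cap U_I)\times\Spec(A)$.

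For the second stage I would construct the comparison with ${\Hilb'}_X^Z$ geometrically. Given a locally trivial deformation $Z_A\subset X\times\Spec(A)$, I first refine $\mathcal{U}$ so that each $Z_A|_{U_i}$ is trivial, and over each $U_i$ lift its trivialization to an automorphism $\phi_i\in\exp(\Theta_X(U_i)\otimes\m_A)$ carrying $(Z\cap U_i)\times\Spec(A)$ onto $Z_A|_{U_i}$. On overlaps $g_{ij}=\phi_i^{-1}\phi_j$ preserves $(Z\cap U_{ij})\times\Spec(A)$, hence lies in $\exp(\Theta_X(-\log Z)(U_{ij})\otimes\m_A)$ and satisfies $g_{ij}g_{jk}=g_{ik}$ on triple overlaps; this is precisely the \v{C}ech datum of a class in $H^1_{\rm sc}(\exp T^\Delta)(A)$. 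Conversely such a class reconstructs a subscheme by gluing the pieces $\phi_i\big((Z\cap U_i)\times\Spec(A)\big)$, which agree on overlaps because the $g_{ij}$ preserve $Z$, and the resulting $Z_A$ is flat with central fibre $Z$ and locally trivial by construction. Two trivializations of the same $Z_A$ differ over $U_i$ by an element of $\exp(\Theta_X(-\log Z)(U_i)\otimes\m_A)$, which is exactly the equivalence relation defining $H^1_{\rm sc}$; I would then check naturality in $A$ and independence of the cover, the latter following from the quasi-isomorphism invariance of $\Def$ under refinement of $\mathcal{U}$.

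The hard part will be the local lifting step: that over a smooth affine a locally trivial embedded deformation of $Z\cap U_i$ is induced by an ambient automorphism $\phi_i$. This is where the smoothness of $X$, the affineness of the $U_i$, and the local-triviality hypothesis genuinely enter, and it is what confines the conclusion to ${\Hilb'}_X^Z$ rather than all of $\Hilb_X^Z$ in general; when $Z$ is smooth every embedded deformation is locally trivial, so ${\Hilb'}_X^Z=\Hilb_X^Z$ and the second assertion is immediate. A secondary technical point is reconciling the concrete exponential cocycle $(\phi_i,g_{ij})$ with the Thom--Whitney formulation of $H^1_{\rm sc}(\exp T^\Delta)$; for this I would combine the local formula above with the concrete description of $H^1_{\rm sc}$ for semicosimplicial Lie algebras in Example~\ref{example.funtore se DGLA sono LIE}, transporting the naive \v{C}ech gauge data across the quasi-isomorphism between a Thom--Whitney totalization and its associated total complex.
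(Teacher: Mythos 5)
Your proposal is correct and follows essentially the same route as the paper: it reduces, via Theorem~\ref{teor.BIsemisipl H^1=Def TOT} and Example~\ref{example. totTW TX(Z)--TX}, to the explicit functor $\Def_{\chi_{TW}}\cong H^1_{\rm sc}(\exp T^\Delta)$, and then encodes a locally trivial deformation by ambient automorphisms $\alpha_i=e^{a_i}$ over an affine cover whose transitions $e^{-a_i}e^{a_j}$ preserve $Z$, with equality of subschemes corresponding exactly to the gauge relation $e^{a_i}=e^{a_i'}e^{b_i}$. The ``local lifting step'' you single out as the hard part is precisely the point the paper treats implicitly (it asserts without further argument that a locally trivial deformation is glued from trivial ones realized by ambient automorphisms over the affine $U_i$), so your identification of where smoothness of $X$, affineness of the $U_i$, and local triviality enter is consistent with, and slightly more explicit than, the published argument.
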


\begin{proof}
For $\K= \mathbb{C} $ and $Z$ smooth, this theorem was already proved in \cite[Theorem 5.2]{ManettiSemireg}, without the use of  semicosimplicial language.

Let $\mathcal{U}=\{U_i\}_{i \in I}$ be an affine open cover of $X$ and $\mathcal{V}=\{V_i=U_i\cap Z\}_{i \in I}$ the induced one on $Z$.
Let  $Z_A$ be a  locally trivial deformation of $Z$ in $X$
over $\Spec(A)$.
Then,   $Z_A$ is obtained by gluing the trivial deformations  $V_i\otimes A$  in $U_i \otimes A$ along the double intersections $V_{ij}\otimes A$, such that the induced deformation of $X$ is trivial.
Therefore, it is determined by a sequence   $\{\theta_{ij}\}_{i<j}$ of automorphisms of the sheaves of $A$-algebras
\[
\xymatrix{ &\mathcal{O}_Z(V_{ij}) & \\
\mathcal{O}_Z(V_{ij})\otimes A \ar[rr]^{\theta_{ij}}_{\simeq}\ar[ur] & &
 \mathcal{O}_Z(V_{ij})\otimes A\ar[ul] \\
           & A,\ar[ru]\ar[lu] &   \\ }
\]
satisfying the cocycle condition
\begin{equation}\label{equa.cociclo auto Vij}
 \theta _{jk}  \theta _{ik}^{-1}
   \theta _{ij}=\Id_{\mathcal{O}_Z(V_{ijk})\otimes A}, \qquad
   \forall \ i<j<k \in I,
\end{equation}
and such that there exist automorphisms  $\alpha_i$ of $\mathcal{O}_X(U_i)\otimes A$ satisfying
\begin{equation}\label{equa.ident su  Uij}
{\theta_{ij}}= {\alpha_i}^{-1}
{\alpha_j} , \qquad \forall i<j.
\end{equation}
Note that  Equation (\ref{equa.ident su  Uij}) implies  (\ref{equa.cociclo auto Vij}).
Since we are in characteristic zero, we  can take the logarithms
and write $\theta_{ij}=e^{d_{ij}}$,  for some
$d_{ij}\in\Theta_X(-\log Z)(U_{ij})\otimes\mathfrak{m}_A$, and $\alpha_i=e^{a_i}$, with $a_i \in \Theta_X(U_i)\otimes\mathfrak{m}_A$.

Therefore,  a    locally trivial deformation of $Z$ in $X$
over $\Spec(A)$ is equivalent to the datum of a sequence $\{ a_i\}_i  \in \prod_i \Theta_X(U_i)\otimes\mathfrak{m}_A$, such that
\[ e^{-a_i}e^{a_j} \in
 \exp(\Theta_X(-\log Z) (U_{ij})\otimes\mathfrak{m}_A) , \qquad \forall \ i<j \in I.
\]

As regards the equivalence relation,  let  $Z_A$  and $Z_A'$ be two deformations of $Z$ in $X$  over  $\Spec(A)$. Denote by $\theta_{ij}=e^{d_{ij}}=e^{-a_i}e^{a_j}$ and $\theta'_{ij}=e^{d'_{ij}}=e^{-a'_i}e^{a'_j}$ the data associated  with $Z_A$  and $Z_A'$, respectively.
The deformations  $Z_A$  and $Z_A'$ are isomorphic if,
 for every
$i$, there exists an automorphism $\beta_i$ of $\mathcal{O}_Z(V_i)\otimes A$,
such that $\theta_{ij}= {\beta_i}^{-1}
{\theta'_{ij}} \beta_j$, for every $i<j$, and satisfying the compatibility relation ${\alpha_i'} \beta_i =\alpha_i$.
Taking again logarithms, an isomorphism between $Z_A$  and $Z_A'$  is equivalent to the existence of a sequence   $\{b_i\}_i \in \prod_i\Theta_X(-\log  Z)(U_i)\otimes\mathfrak{m}_A$, such that $e^{a_{i}} =e^{a'_i}e^{ b_i }$.

Next, from the DGLA point of view, we showed in Example   \ref{example. totTW TX(Z)--TX}, that
$
\Def_{\tot_{TW}^\blacktriangle(\chi^\blacktriangle)}\cong H^1_{\rm sc}( \exp      \chi^\blacktriangle     ) \cong \Def_ {\chi_{TW}}$. Therefore,    it is enough to prove that ${\Hilb'}_{X}^Z \cong \Def_ {\chi_{TW}}$, with
${\chi_{TW}}: \tot_{TW}({\Theta}_X(-\log Z)(\mathcal{U})) \hookrightarrow  \tot_{TW}({\Theta}_X(\mathcal{U}))$; and
it follows from the explicit description of $\Def_ {\chi_{TW}}$. Indeed,  $\MC_ {\chi_{TW}}(A)$ is the set of all $a\in
  {\tot_{TW}({\Theta}_X(\mathcal{U}))}^0\otimes
\mathfrak{m}_A  $, such that $
e^{-a}\ast0 \in {\tot_{TW}({\Theta}_X(-\log Z)(\mathcal{U})}) ^1\otimes\mathfrak{m}_A$, i.e., $a=\{ a_i\}_i \in \prod_{i}{\Theta}_X  (U_{i}) \otimes \mathfrak{m}_A $, such that $e^{-a_i}e^{a_j} \in
 \exp(\Theta_X(-\log Z) (U_{ij})\otimes\mathfrak{m}_A) $. Moreover,
$a\sim a'$ if and only is   there exist $b \in
{\tot_{TW} ({\Theta}_X(-\log Z)(\mathcal{U}))} ^0\otimes\mathfrak{m}_A,$ such that     $e^{a'} =   e^a e^{- b } $, i.e., $b=\{b_i\}_i \in \prod_i\Theta_X(-\log  Z)(U_i)\otimes\mathfrak{m}_A$ such that $e^{a_{i}} =e^{a'_i}e^{ b_i}$.

\end{proof}

\begin{remark}
In a forthcoming paper,    we will use this theorem to study the obstructions to the  deformations of $Z$ in $X$, via the semiregularity map.
 \end{remark}

\end{document}